\documentclass[11pt,twoside]{article}

\usepackage{enumerate}
\usepackage{graphics}
\usepackage{graphicx}
\usepackage{amssymb}
\usepackage{amsmath}
\usepackage{amsthm}
\usepackage{mathtools}
\usepackage{latexsym}
\usepackage[title,titletoc]{appendix}
\usepackage{color}
\usepackage{mathrsfs}
\usepackage{indentfirst}
\usepackage{txfonts}
\usepackage{anysize}
\usepackage{enumitem}
\usepackage{microtype}
\usepackage{booktabs}
\usepackage{longtable}
\usepackage{array}
\usepackage{url}

\usepackage[colorlinks=true,
linkcolor=blue,
citecolor=red,
urlcolor=magenta,
]{hyperref}

\usepackage{txfonts}
\usepackage{anysize}

\allowdisplaybreaks

\newtheorem{theorem}{Theorem}[section]
\newtheorem{lemma}[theorem]{Lemma}

\newtheorem{proposition}[theorem]{Proposition}
\theoremstyle{definition}

\numberwithin{equation}{section}

\begin{document}

\title{\vspace{-0cm}\bf\Large
A Solution of Problem 2.5 by Brezis on the Planar Ginzburg--Landau
Equation\footnotetext{\hspace{-0.35cm}
2020 \emph{Mathematics Subject Classification}. Primary 35J60;
Secondary 35B40, 35B45.
\endgraf \emph{Key words and phrases}. Ginzburg--Landau equation,
entire solution, finite potential energy, Kelvin inversion, Jacobi field.
\endgraf This project is partially supported by
the National Natural Science Foundation of China
(Grant Nos. 12431006, 12371093, and 12501118),
the Natural Science Foundation of Fujian Province (Grant No. 2026J008197),
the Beijing Natural Science Foundation (Grant No. 1262011),
the Fundamental Research Funds for the Central Universities
(Grant No. 2253200028), and Longyuan Young Talents of Gansu Province.}}
\author{Xiaosheng Lin, Dachun Yang\footnote{Corresponding
author, E-mail: \texttt{dcyang@bnu.edu.cn}/{\color{red}\today}/Final version.},
\ \ Sibei Yang, Wen Yuan
and Yangyang Zhang}
\date{}
\maketitle

\vspace{-0.8cm}

\begin{center}
\begin{minipage}{13.8cm}
{\small {\bf Abstract}\quad
Brezis, Merle, and Rivi\'ere [Arch. Rational Mech. Anal. 1994] proved that,
if a smooth solution
$u:\mathbb{R}^2\to\mathbb{C}$ of the planar entire Ginzburg--Landau equation
$$
-\Delta u = u(1-|u|^2)\quad \text{in}\quad \mathbb{R}^2
$$
satisfies the finite potential energy estimate that
$$
\int_{\mathbb{R}^2}\left[1-|u(x)|^2\right]^2\,dx < \infty,
$$
then it has the asymptotic property that
$$
|u(x)|\to 1\quad \text{as}\quad |x|\to\infty.
$$
The converse problem whether the asymptotic property implies
the finite potential energy estimate
was originally posed in their work and later formulated
by Brezis as \emph{Open Problem 2.5} in [Atti Accad. Naz. Lincei Rend. Lincei Mat. Appl. 2023].
In this article, we give an affirmative answer to this question.
Our proof relies on the Kelvin inversion, a Morrey-type energy decay
for the translation Jacobi system,
the $L^4$-integrability of the phase form,
and a linearised amplitude equation; all these tools
together imply the $L^2$-integrability of the function $1-|u|$ on an exterior domain, without
any a priori integrability assumption.}
\end{minipage}
\end{center}

\tableofcontents

\section{Introduction and main results}

The Ginzburg--Landau equation, arising in the description of macroscopic stationary states of superfluids,
Bose--Einstein condensates, and solitary waves in plasmas, has been a subject of sustained investigation
over the past several decades (see, for example, \cite{bbo02,bos06,b94,bmr94,l98,l96,l95,lr01,pr00})
and continues to attract considerable interest in modern analysis and
partial differential equations (see, for example, \cite{bd24,bd23,bbh17,ddmr22,djm25,e13,lmww25}).
In particular, the planar entire Ginzburg--Landau equation is
\begin{equation}\label{eq:GL}
-\Delta u=u\left(1-|u|^2\right)\quad\hbox{in}\quad\mathbb{R}^2,
\end{equation}
where $u:\mathbb{R}^2\to\mathbb{C}$ is smooth.
A classical result of Brezis, Merle, and Rivi\'ere \cite{bmr94} proved that, if $u$ satisfies \eqref{eq:GL}
and the finite potential energy estimate that
\begin{equation}\label{e1.2}
\int_{\mathbb{R}^2}\left[1-|u(x)|^2\right]^2\,dx < \infty,
\end{equation}
then it has the asymptotic property that
\begin{equation}\label{eq:modulus-limit}
|u(x)|\rightarrow1\qquad\hbox{as }|x|\to\infty.
\end{equation}

Conversely, Brezis, Merle, and Rivi\'ere \cite[Problem 2]{bmr94} asks whether the asymptotic
property \eqref{eq:modulus-limit} always implies the finite potential energy estimate
\eqref{e1.2} for the smooth solution $u$ to \eqref{eq:GL}; Brezis \cite{Brezis2023}
later posed this as \emph{Open Problem 2.5} in his ``favorite open problems" (see also \cite[Open Problem 2]{b99}).

In this article, we give an affirmative solution of this problem.

\begin{theorem}\label{thm:main}
If the solution $u\in C^\infty(\mathbb{R}^2;\mathbb{C})$ to \eqref{eq:GL} has
the asymptotic property \eqref{eq:modulus-limit}, then it also satisfies
the finite potential energy estimate \eqref{e1.2}.
\end{theorem}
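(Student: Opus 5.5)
Since $|u(x)|\to 1$ as $|x|\to\infty$, I fix $R_0$ with $|u|\ge 1/2$ on $\{|x|\ge R_0\}$. On this exterior domain I write $u=\rho e^{i\varphi}$ with $\rho=|u|$, where the phase is only locally defined but the phase form $j:=u\wedge\nabla u/\rho^2=\nabla\varphi$ is a globally defined closed $1$-form. Because $u$ is smooth, $\int_{B_{R_0}}(1-|u|^2)^2\,dx<\infty$ trivially, so it suffices to show $\int_{|x|>R_0}(1-\rho)^2\,dx<\infty$; indeed $(1-\rho^2)^2\le 4(1-\rho)^2$ whenever $\rho\le 1$, and $\rho\le 1$ everywhere by the maximum principle applied to $|u|^2$. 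The equation splits into the current equation $\operatorname{div}(\rho^2 j)=0$ and the amplitude equation
\begin{equation*}
-\Delta\rho+\rho|j|^2=\rho(1-\rho^2).
\end{equation*}
Setting $w:=1-\rho\in[0,1/2]$, the amplitude equation becomes $\Delta w=\rho|j|^2-\rho(1-\rho)(1+\rho)$, which near infinity linearises to $-\Delta w+2w\approx-|j|^2$. Formally, then, $w$ is controlled by $|j|^2$ up to exponentially decaying errors. The strategy is to show $j\in L^4(\{|x|>R_1\})$ and then obtain $w\in L^2$ from this linearised equation, for instance by testing with $w$ over annuli, or by comparing with the resolvent of $-\Delta+2$, whose kernel decays exponentially.

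The key steps are as follows. (1) \emph{Gradient bounds.} Standard elliptic estimates together with $|u|\le 1$ give $|\nabla u|\le C$ and uniform $C^k$ bounds. Since $|u|\to1$, the limit $w\to 0$ also holds in $C^k_{\rm loc}$ on unit balls far away, so $|\nabla\rho|\to0$. However, $|j|$ need not tend to zero a priori. (2) \emph{Degree and Pohozaev.} The degree $d$ of $u$ on large circles is well defined. A Pohozaev identity on $B_R$ gives
\begin{equation*}
\frac{1}{2}\int_{B_R}(1-|u|^2)^2\,dx=\int_{\partial B_R}\Big[R\,|\partial_r u|^2-\frac R2|\nabla u|^2+\frac R4(1-|u|^2)^2\Big]\,d\sigma,
\end{equation*}
so finite potential energy reduces to controlling the boundary terms along a sequence $R_k\to\infty$. (3) \emph{Kelvin inversion and the Jacobi system.} I differentiate the equation: $v=\partial_k u$ solves the linearised (translation Jacobi) system $-\Delta v=v(1-|u|^2)-2u\,\mathrm{Re}(\bar u v)$. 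I then pass to the inverted variable $y=x/|x|^2$ near $y=0$ and prove a Morrey-type decay of the form $\int_{B_r(x_0)}|\nabla u|^2\lesssim r^{\alpha}$ uniformly in centres, and likewise its scale-invariant counterpart at infinity: $R^{-1}\int_{B_{2R}\setminus B_R}|\nabla u|^2\,dx$ stays bounded. The idea is that the Jacobi fields $\partial_k u$ are orthogonal in the amplitude direction up to $O(w)$, so the amplitude part is damped by the mass term $2$, while the phase part is harmonic-like; harmonic conjugacy for $\rho^2 j$ (divergence free) then gives a stream function $\psi$ with $\nabla^\perp\psi=\rho^2 j$ and $\Delta\psi=\operatorname{curl}(\rho^{2}j)=2\rho\nabla\rho\wedge j$, a quadratically small source. (4) \emph{$L^4$ of the phase form.} From $j=\frac{d\,x^\perp}{|x|^2}+\nabla h$ with $h$ nearly harmonic at infinity (modulo the quadratically small source), and the energy bound from (3), I get $|\nabla h|\lesssim|x|^{-1}$ pointwise via mean-value estimates, hence $|j|\lesssim |x|^{-1}$ and $j\in L^4$ on the exterior domain, since $\int^\infty r^{-4}r\,dr<\infty$. (5) \emph{Amplitude.} Multiplying $-\Delta w+\rho(1-\rho)(1+\rho)=-\rho|j|^2$ by $w\chi_R$, with $\chi_R$ a cutoff, and using $\rho(1+\rho)\ge 3/4$ together with $|\nabla w|\to0$, I expect
\begin{equation*}
\int w^2\chi_R\lesssim \int|j|^4+\int w^2|\Delta\chi_R|+\cdots,
\end{equation*}
and the boundary terms are absorbed because $w$ is small there, e.g.\ by choosing $\chi_R$ with $\int|\nabla\chi_R|^2$ bounded in two dimensions (a logarithmic cutoff). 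This yields $w\in L^2$.

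I expect the main obstacle to be step (3)–(4): obtaining $|j|\in L^4$ with no a priori energy assumption, since the kinetic energy $\int|\nabla u|^2$ may diverge logarithmically, and a priori even worse. The delicate point is the regime in which the phase oscillates strongly on large annuli, which $|u|\to1$ alone does not rule out. My first attempt would be to exploit that $\rho^2 j$ is divergence free with curl of size $|\nabla w||j|$, which is $o(|j|)$, and to run a bootstrap on dyadic annuli: define $E(R)=R^{-2}\int_{B_{2R}\setminus B_R}|j|^2$ and show $E(2R)\le \tfrac12 E(R)+CR^{-2}$ using the near-harmonicity and the Jacobi-field monotonicity after Kelvin inversion. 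This forces $E(R)\lesssim R^{-2}$, which gives the pointwise bound in (4).
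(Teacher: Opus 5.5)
\textbf{There is a genuine gap: steps (3)--(4) are not proved, and as written they cannot work.} Your outer framework matches the paper: $|u|\le1$ by the maximum principle, then $j\in L^4$ on an exterior domain, then $1-\rho\in L^2$ by testing the linearised amplitude equation against a cutoff. Your step (5) is essentially the paper's last step. But steps (3)--(4) carry the whole difficulty.

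\emph{The Morrey decay you state is false in general.} You want decay for $|\nabla u|^2$ at the inversion point. By conformal invariance, $\int_{B_r}|\nabla(u\circ I)|^2\,dy=\int_{\{|x|>1/r\}}|\nabla u|^2\,dx$. When the degree $d\neq0$, Cauchy--Schwarz on $\int_{\partial B_s}j\cdot\tau\,ds=2\pi d$ (with $\tau$ the unit tangent) shows each dyadic annulus carries energy at least $c\,d^2$, so this integral is infinite. Likewise, getting $|j|\lesssim|x|^{-1}$ from mean-value estimates presupposes $\int_{\mathcal A_R}|j|^2=O(1)$, which is exactly what is at stake.

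\emph{The missing idea is to run the energy method one derivative higher.} The paper works with the translation Jacobi fields in amplitude--phase form, $a_\ell=\partial_\ell(1-\rho)$ and $b_\ell=p_\ell$. Their energy at infinity can be finite, because the winding part $d\,x^\perp/|x|^2$ contributes only $O(|x|^{-2})$ to $\nabla b_\ell$. The paper's argument then runs as follows.
\begin{enumerate}[label=(\roman*)]
\item The Jacobi system is coercive by the Schur-complement bound $m-4|p|^2\ge\frac12$. This gives a Caccioppoli inequality and finite tail energy.
\item After Kelvin inversion, removing the point singularity and using harmonic replacement give a Morrey decay $E(r)\le C_\beta r^{2\beta}$ for the inverted Jacobi energy.
\item Poincar\'e on disks applied to $p\circ I$, telescoped using $(p\circ I)(y)\to\mathbf 0$ as $y\to\mathbf 0$, gives $\int_{\mathcal A_R^*}|p|^2\lesssim R^{2-2\beta}$ and $\int_{\mathcal A_R^*}|\nabla p|^2\lesssim R^{-2\beta}$.
\item Gagliardo--Nirenberg on annuli then gives $\int_{\mathcal A_R}|p|^4\lesssim R^{2-4\beta}$, which is summable for $\beta>\frac12$.
\end{enumerate}
No pointwise bound on $j$ is ever needed.

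\textbf{Your claim that $|j|$ need not tend to $0$ is false, and this fact is indispensable.} The compactness argument from your step (1) gives $C^1_{\rm loc}$ limits $U$ of translates with $|U|\equiv1$ and $\Delta U=0$. Hence $|\nabla U|^2=\frac12\Delta|U|^2=0$, so $|\nabla u|\to0$ and $p\to0$ uniformly at infinity. This fact does two jobs in the paper.
\begin{enumerate}[label=(\roman*)]
\item For a cutoff $\varphi_L$ at scale $L$, it makes the error $\int(a_\ell^2+b_\ell^2)|\nabla\varphi_L|^2\lesssim\sup_{B_{2L}\setminus B_L}(a_\ell^2+b_\ell^2)$ vanish. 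That is how the tail energy is shown finite with no a priori integrability.
\item It makes the coupling $\sup_{B_r}|P|$ small in the Morrey iteration.
\end{enumerate}

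\textbf{Your fallback bootstrap also fails on its own terms.} You give no mechanism for $E(2R)\le\frac12E(R)+CR^{-2}$. Even granting it, iterating yields only $E(R)\lesssim R^{-1}$, i.e.\ $\int_{\mathcal A_R}|j|^2\lesssim R$, not $E(R)\lesssim R^{-2}$. That bound by itself does not give $j\in L^4$.

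\textbf{Minor point.} The derivative terms in your Pohozaev identity have the wrong sign; they should combine to $\frac R2(|\partial_\tau u|^2-|\partial_r u|^2)$. The degree-one vortex, whose potential energy is $2\pi$, confirms this. Step (2) is never used, though.
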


We now describe the proof strategy for Theorem \ref{thm:main}.
The proof of Theorem \ref{thm:main} can be reduced to establishing two key estimates:
(i) the global bound $|u|\le 1$ on $\mathbb{R}^2$ (via the maximum principle);
(ii) the $L^2$-integrability of the function $h:=1-|u|$ on an exterior domain
$\{x\in\mathbb{R}^2:|x|>R_0\}$. The latter is the real challenge,
as no integrability is assumed a priori.

The main strategy of proving (ii) is to convert the asymptotic property into
a quantitative decay by studying a linearised system. On an exterior domain,
we decompose the solution $u=\rho\sigma$ into the product
of the amplitude $\rho$ and the phase $\sigma$, and we introduce the phase gradient $p$.
Linearising the amplitude equation gives a coupled Jacobi system for
$h$ and $p$, which admits a strong energy identity. Using the \emph{Kelvin inversion},
this exterior problem is transformed into a singular system on the ball $B_r$
with radius $r\in (0,\infty)$
small center at   the origin $\mathbf{0}$,
where a careful energy estimate yields a Morrey-type decay
$$
E(r)\le C_\beta r^{2\beta}
$$
for an energy $E(r)$ involving derivatives of $h$ and $p$.
This decay, together with a Gagliardo--Nirenberg interpolation
inequality, further implies the
$L^4$-integrability of $p$ on an exterior domain. Substituting this into
the following linear equation for $h$
$$
-\Delta h + \rho(1+\rho)h = \rho |p|^2
$$
further yields the $L^2$-integrability of $h$.
Together with $|u|\le 1$ on $\mathbb{R}^2$, this immediately
gives $1-|u|^2\in L^2(\mathbb{R}^2)$, proving the desired finite energy estimate.

The remainder of this article is organized as follows.
In Section \ref{s3}, we prove  Theorem~\ref{thm:main}
by first establishing two key propositions: the maximum modulus estimate of the solution $u$ to \eqref{eq:GL}
(Proposition~\ref{lem:maxmod}) and the $L^2$-integrability of the function $h:=1-|u|$ on
an exterior domain (Proposition~\ref{prop:h-L2}). Section~\ref{s4}
establishes several asymptotic properties for the solution $u$
and its derivatives, and derives the aforementioned amplitude--phase decomposition
and the associated system of equations on an exterior domain.
In Section~\ref{s5}, we study the Jacobi system for the derivatives
of the amplitude and the phase.
Specifically,
we prove a Caccioppoli-type inequality
and obtain a finite tail energy estimate. Section~\ref{s6} is devoted
to the Kelvin inversion argument: after transforming the exterior problem
into a singular system near
the origin, we establish a Morrey-type decay for energy involving derivatives of $h$ and $p$.
Section~\ref{s7} exploits this decay to prove the $L^4$-integrability of the phase form.
Finally, Section~\ref{s8} completes the proof of Proposition~\ref{prop:h-L2} by combining
the preceding estimates with a linearised amplitude equation.

We end this introduction by making some notational conventions. Throughout this
article, we \emph{always} denote by $C$ a positive constant which is independent
of the main parameters involved, but it may vary from line to line. We also use the
\emph{symbol} $C_{\alpha,\beta,\ldots}$ or $c_{\alpha,\beta,\ldots}$ to denote
a positive constant depending on the indicated parameters $\alpha,\beta,\ldots.$
We always let $\mathbb{N}:=\{1,2,\ldots\}$.  The \emph{symbols} $\mathbb{R}$ and $\mathbb{C}$
respectively denote the \emph{real} and the \emph{complex} numbers.
We identify $\mathbb{R}^2$ with $\mathbb{C}$ by the bijection
$(x_1,x_2)\leftrightarrow x_1+i x_2$, where $i^2=-1$.  For $z\in\mathbb{C}$, $\overline z$,
$\operatorname{Re} z$, $\operatorname{Im} z$, and $|z|$ respectively denote its \emph{complex
conjugate}, \emph{real part}, \emph{imaginary part}, and \emph{modulus}.
The \emph{unit circle} $\mathbb{S}^1$ is defined by setting $\mathbb{S}^1:=\{z\in\mathbb{C}:|z|=1\}$.
For a scalar or complex-valued function $v$ on an open set in $\mathbb{R}^2$, we
write
\begin{align*}
\partial_jv:=\frac{\partial v}{\partial x_j},\quad
\nabla v:=(\partial_1v,\partial_2v),\quad\text{and}\quad
\Delta v:=\partial_1^2v+\partial_2^2v.
\end{align*}
For a real vector field $F:=(F_1,F_2)$, its divergence and its curl  are defined respectively as
\begin{align*}
\operatorname{div} F:=\partial_1F_1+\partial_2F_2
\quad\text{and}\quad
\operatorname{curl} F:=\partial_1F_2-\partial_2F_1.
\end{align*}
For a map $V=(V_1,V_2):\Omega\to\mathbb{R}^2$, its \emph{Jacobian matrix} is defined as
$\nabla V:=(\partial_jV_k)_{1\le j, k\le 2}$ and its \emph{Frobenius norm} is defined as
$|\nabla V|:=(\sum_{j,k=1}^2|\partial_jV_k|^2)^{1/2}.$
In particular, for a real-valued function $f$ on an open set $\Omega$ in $\mathbb{R}^2$,
its \emph{Hessian matrix} is defined as $\nabla^2f=(\partial_j\partial_kf)_{j,k=1}^2$ with
$|\nabla^2f|:=(\sum_{j,k=1}^2|\partial_j\partial_kf|^2)^{1/2}$.  If $M=(M_{jk})$ is a
real $2\times2$ matrix, then $M^T:=(M_{kj})$ denotes its \emph{transpose} and
$\det M:=M_{11}M_{22}-M_{12}M_{21}$ denotes its \emph{determinant}.  If
$x:=(x_1,x_2),y:=(y_1,y_2)\in\mathbb{R}^2$, then $x\otimes y$ denotes the matrix with entries
$\{(x\otimes y)_{jk}:=x_jy_k\}_{j,k=1}^2$. Moreover, the $2\times2$ \emph{identity matrix} is denoted
by ${\mathrm I}_2$.
For any $R\in(0,\infty)$, define
\begin{align*}
 B_R:=\{x\in\mathbb{R}^2:|x|<R\}\quad\text{and}\quad
\Omega_R:=\mathbb{R}^2\setminus\overline{B}_R,
\end{align*}
where, for a measurable set $E\subset\mathbb{R}^2$, $\overline{E}$ denotes the \emph{closure} of
$E$ in $\mathbb{R}^2$. We also use ${\bf0}$ to denote the \emph{origin} of $\mathbb{R}^2$.
Let $\Omega\subset\mathbb{R}^2$ be an open set. For any given $k\in\mathbb{N}$ and
$q\in[1,\infty)$, $W^{k,q}(\Omega)$ denotes the \emph{Sobolev space} on $\Omega$. In particular,
when $k=1$ and $q=2$, let $H^1(\Omega):=W^{1,2}(\Omega)$ and denote by $H_0^1(\Omega)$ the
closure of $C_{\rm c}^\infty(\Omega)$ in $H^1(\Omega)$, where $C_{\rm c}^\infty(\Omega)$
denotes the set of all infinitely differentiable functions on $\Omega$ with compact
support contained in $\Omega$. The space $C^\infty(\overline\Omega)$ consists of the restrictions to
$\overline\Omega$ of all  functions smooth on an open neighbourhood of
$\overline\Omega$.  For any given $\alpha\in(0,1)$,
$C^{1,\alpha}(\overline\Omega)$ denotes the set of all functions whose first
derivatives are H\"older continuous with exponent $\alpha$ on
$\overline\Omega$. Furthermore, a subscript ``$\mathrm{loc}$'' means that the stated property
holds on every compact subset of the open set.
If $F\in L^1_{\mathrm{loc}}(\Omega;\mathbb{R}^2)$, the statement
$\operatorname{div} F=0$ in the sense of distributions means that, for
any $\varphi\in C_{\rm c}^\infty(\Omega)$,
\begin{align*}
\int_\Omega F\cdot\nabla\varphi\,dx=0.
\end{align*}

\section{Proof of Theorem \ref{thm:main}}\label{s3}

To show Theorem \ref{thm:main}, we first establish the following maximum modulus estimate
for the solution to \eqref{eq:GL} under the assumption \eqref{eq:modulus-limit}.

\begin{proposition}\label{lem:maxmod}
Assume that the solution $u\in C^\infty(\mathbb{R}^2;\mathbb{C})$ to \eqref{eq:GL}
has the asymptotic property \eqref{eq:modulus-limit}. Then
\begin{equation}\label{eq:maxmod}
|u(x)|\le1\quad\hbox{for all}\ x\in\mathbb{R}^2.
\end{equation}
\end{proposition}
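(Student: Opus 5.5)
We argue by the maximum principle applied to the scalar function $w:=|u|^2-1$, which is smooth on $\mathbb{R}^2$ and satisfies $w(x)\to0$ as $|x|\to\infty$ by \eqref{eq:modulus-limit}. The first step is to compute the equation for $w$. Using \eqref{eq:GL}, namely $\Delta u=-u(1-|u|^2)$, we get
\begin{align*}
\Delta |u|^2=2\operatorname{Re}\left(\overline u\,\Delta u\right)+2|\nabla u|^2
=-2|u|^2\left(1-|u|^2\right)+2|\nabla u|^2 .
\end{align*}
Hence
\begin{align*}
\Delta w = 2|u|^2 w+2|\nabla u|^2\ \ge\ 2|u|^2 w \quad\text{in }\mathbb{R}^2 .
\end{align*}
In particular, $w$ is subharmonic on the open set $\{w>0\}$, where in fact $\Delta w>0$.

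Next, suppose for contradiction that $w(x_0)>0$ for some $x_0\in\mathbb{R}^2$. Since $w\to0$ at infinity, choose $R$ so large that $|x_0|<R$ and $w<w(x_0)/2$ on $\Omega_R$. Then $w$ attains its maximum over $\overline B_R$ at an interior point $x_1\in B_R$, because the boundary values are smaller than $w(x_0)$. Thus $w(x_1)\ge w(x_0)>0$, and $x_1$ is a global maximum of $w$ on $\mathbb{R}^2$. At such an interior maximum, $\Delta w(x_1)\le0$. On the other hand, $\Delta w(x_1)\ge 2|u(x_1)|^2w(x_1)>0$, since $w(x_1)>0$ forces $|u(x_1)|^2>1$. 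This contradiction shows $w\le0$ everywhere, that is, \eqref{eq:maxmod} holds.

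There is no real obstacle here. The only points needing care are the sign computation in $\Delta|u|^2$ and the use of the decay at infinity to guarantee that a positive supremum is attained at an interior point. Smoothness of $u$ makes the pointwise second-derivative test legitimate.
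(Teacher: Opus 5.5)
Your proof is correct and follows essentially the same route as the paper, which works with $v=|u|^2$ and $\Delta v=2|\nabla u|^2+2v(v-1)$, uses $v\to1$ at infinity to place a supremum $M>1$ at an interior point, and reaches the same sign contradiction there. Your shift to $w=|u|^2-1$ is purely cosmetic.
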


\begin{proof}
Let $v:=|u|^2$.  Since the equation \eqref{eq:GL} is equivalent to $\Delta u=u(|u|^2-1)$, it follows that
\begin{equation}\label{eq:Deltav}
\Delta v=2|\nabla u|^2+2\mathrm{Re}(\overline{u}\Delta u)=2|\nabla u|^2+2v(v-1).
\end{equation}
By \eqref{eq:modulus-limit}, we find that $v(x)\to1$ as $|x|\to\infty$, which, combined with the assumption
$u\in C^\infty(\mathbb{R}^2;\mathbb{C})$, further implies that $v$ is bounded on $\mathbb{R}^2$.  If
$M:=\sup_{x\in\mathbb{R}^2}v(x)>1$, choose $R$ so large that
$v(x)<(M+1)/2<M$ whenever $|x|\ge R$.  Then there exists a point $x_0\in B_R$ such that $v(x_0)=M$,
which implies that $\Delta v(x_0)\le0$. Moreover, from \eqref{eq:Deltav}, we infer that
$$\Delta v(x_0)\ge2M(M-1)>0,$$
which contradicts $\Delta v(x_0)\le0$. Therefore, $\sup_{x\in\mathbb{R}^2}v(x)\le1$ and hence
\eqref{eq:maxmod} holds. This finishes the proof of Proposition \ref{lem:maxmod}.
\end{proof}

Furthermore, we also need the following $L^2$-integrability of the function
$h:=1-|u|$ on an exterior domain $\Omega_{R_0}$, whose proof will be given later in Section \ref{s8}.

\begin{proposition}\label{prop:h-L2}
Let $u\in C^\infty(\mathbb{R}^2;\mathbb{C})$ be a solution
to \eqref{eq:GL}  satisfying \eqref{eq:modulus-limit}, and let
$h:=1-|u|$. Then there exists $R_0\in(0,\infty)$ such that
\begin{equation}\label{eq:h-L2}
 h\in L^2(\Omega_{R_0};\mathbb{R}).
\end{equation}
\end{proposition}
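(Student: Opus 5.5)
We follow the strategy outlined in the introduction. First, using \eqref{eq:modulus-limit} and Proposition~\ref{lem:maxmod}, choose $R_1$ so large that $|u|\ge 1/2$ on $\Omega_{R_1}$. Standard elliptic estimates for \eqref{eq:GL}, with $u$ bounded, give $\nabla^k u\in L^\infty(\mathbb{R}^2)$ for all $k$. A blow-up argument then yields $|\nabla \rho|\to0$ and $\nabla^2\rho\to 0$ at infinity. Indeed, translates $u(\cdot+x_n)$ converge locally in $C^k$ to a solution with modulus identically $1$, which must be a harmonic map to $\mathbb{S}^1$ of bounded gradient, hence $e^{i(a\cdot x+b)}$ with $|a|^2=0$. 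Since the limit is constant, this also gives $|\nabla u|\to0$. On $\Omega_{R_1}$ write $u=\rho\sigma$ with $\rho=|u|$, $|\sigma|=1$, and set $p:=\operatorname{Im}(\overline{\sigma}\nabla\sigma)$, a smooth, bounded, curl-free vector field (not necessarily exact, because of the degree). The equation \eqref{eq:GL} splits into
\begin{align*}
\operatorname{div}(\rho^2 p)=0,\qquad -\Delta\rho+\rho|p|^2=\rho(1-\rho^2)\quad\text{in }\Omega_{R_1},
\end{align*}
and with $h=1-\rho$ the second equation is exactly $-\Delta h+\rho(1+\rho)h=\rho|p|^2$, with $\rho(1+\rho)\ge 3/4$ on $\Omega_{R_1}$.

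The reduction is as follows. If $p\in L^4(\Omega_{R_2})$ for some $R_2$, then $L^2$-integrability of $h$ follows. Take a cutoff $\eta$ vanishing on $B_{R_2+1}$ and equal to $1$ on $\Omega_{R_2+2}$, together with $\chi_R(x)=\chi(x/R)$. Test the $h$-equation against $\eta^2\chi_R^2 h$ and integrate by parts. The boundary and gradient cross terms are controlled by $|\nabla(\eta\chi_R)|^2h^2$. This term is supported where $\nabla\eta\neq0$ (a compact set) or in the annulus $R<|x|<2R$, where it is at most $CR^{-2}\int_{R<|x|<2R}h^2$. It can be absorbed by choosing $R$ large relative to the coercivity constant $3/4$. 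The right side is bounded by $\tfrac14\int\eta^2\chi_R^2h^2+C\int\eta^2|p|^4$. Letting $R\to\infty$ gives $\int_{\Omega_{R_2+2}}h^2\le C(1+\|p\|_{L^4}^4)$. So everything reduces to $p\in L^4$ near infinity.

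To obtain $L^4$ for $p$, differentiate the system in $x_j$ (translation invariance). The pair $(\partial_j h,\partial_j p)$ then solves a linearised Jacobi system, with coefficients that are small perturbations of the constant ones as $|x|\to\infty$. Testing this system gives a Caccioppoli inequality on annuli, and in particular finiteness of the tail energy
\[
E_{\rm ext}(R):=\int_{\Omega_R}\left(|\nabla h|^2+|\nabla p|^2+h^2\text{-type terms}\right).
\]
Then apply the Kelvin inversion $x\mapsto x/|x|^2$, which maps $\Omega_R$ to $B_{1/R}$ and preserves Dirichlet energy in two dimensions. The problem becomes a singular system near $\mathbf{0}$ with weights $|y|^{-4}$ on the zeroth-order terms. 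A hole-filling argument on $B_r\setminus B_{r/2}$ gives $E(r)\le\theta E(2r)$ with $\theta<1$, hence $E(r)\le C_\beta r^{2\beta}$. Pulled back, this says that the dyadic energies of $\nabla h,\nabla p$ on $\{R<|x|<2R\}$ decay like $R^{-2\beta}$.

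Finally, apply Gagliardo--Nirenberg on each dyadic annulus $A_R$ to $p$ minus its mean, or to $p$ itself using $|p|\to0$: $\|p\|_{L^4(A_R)}^4\lesssim\|p\|_{L^2(A_R)}^2\|\nabla p\|_{L^2(A_R)}^2+R^{-2}\|p\|_{L^2(A_R)}^4$. Control $\|p\|_{L^2(A_R)}$ through the $h$-equation or the energy identity, then sum over dyadic $R$ using the geometric decay. \textbf{Main obstacle:} the Morrey decay step. The Caccioppoli and hole-filling estimates must close without any a priori integrability of $h$ or $p$. In particular, the nonconstant degree part of $p$ (of size about $d/|x|$, which is not in $L^2$ but is in $L^4$) has to be split off before applying the energy estimates. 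I would first try subtracting $d\,x^\perp/|x|^2$ from $p$ and working with the remainder, which is exact.
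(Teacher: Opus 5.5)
You follow the paper's architecture: amplitude--phase splitting, translation Jacobi system with finite tail energy, Kelvin inversion, Morrey decay, Gagliardo--Nirenberg on dyadic annuli, then the $h$-equation. The gap is the Morrey step: hole-filling does not give the exponent the last step needs. The dyadic summation closes only if $\beta>1/2$. From $E(r)\le Cr^{2\beta}$ one gets $\int_{\mathcal A_R^*}|\nabla p|^2\le CR^{-2\beta}$ and $\int_{\mathcal A_R^*}|p|^2\le CR^{2-2\beta}$. Gagliardo--Nirenberg then yields $\int_{\mathcal A_R}|p|^4\le CR^{2-4\beta}$, which is summable over $R=2^kR_3$ if and only if $\beta>1/2$. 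Hole-filling gives $E(r)\le\theta E(2r)$ with $\theta=C/(C+1)$, where $C$ comes from the Caccioppoli and Poincar\'e constants, and hence only $2\beta=\log_2(1/\theta)$. Getting $\beta>1/2$ would require $C<1$, which nothing in the argument provides, so writing $C_\beta r^{2\beta}$ as if $\beta$ were free is unjustified.

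The missing idea is that the inverted system \emph{decouples} at the origin: $\delta(r)=\sup_{B_r}|P|\to0$ and $\omega(r)=\sup_{B_r}|1-\rho_K^2|\to0$. For the amplitude part, the $|y|^{-4}$ mass absorbs the cutoff term, since $r^{-2}\int_{B_r}A_\ell^2\le r^2\int_{B_r}|y|^{-4}A_\ell^2$. This gives $E_A(\vartheta r)\le C\delta(r)^2E_B(r)+Cr^2E_A(r)$. For the phase part, compare $B_\ell$ with its harmonic replacement on $B_r$ to get $E_B(\vartheta r)\le2\vartheta^2E_B(r)+C\omega(r)^2E_B(r)+C\delta(r)^2E_A(r)$. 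Choose $\vartheta$ with $2\vartheta^2\le\vartheta^{2\beta}/4$, then $r$ small; this yields $E(\vartheta r)\le\vartheta^{2\beta}E(r)$ for \emph{every} $\beta<1$. Both tests need the equations to hold across $\mathbf 0$. This follows from $\nabla A_\ell,\nabla B_\ell,|y|^{-2}A_\ell\in L^2$ by a point-removal argument.

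The bound $\int_{\mathcal A_R^*}|p|^2\le CR^{2-2\beta}$ does not come ``through the $h$-equation or the energy identity''. The $h$-equation only bounds $\int|p|^2$ by integrals of $h$, which are not yet known to be finite. What works is a Campanato argument in the inverted disk. For the disk averages $M(r)$ of $B=p\circ I$, Poincar\'e plus Morrey decay give $|M(r)-M(r/2)|\le Cr^\beta$. Since $B(y)\to\mathbf 0$ as $y\to\mathbf 0$, telescoping gives $|M(r)|\le Cr^\beta$, hence $\int_{B_r}|B|^2\le Cr^{2+2\beta}$. The change of variables $dx=|y|^{-4}dy$ transfers this back to the annuli.

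The obstacle you single out is not one. The field $p$ is single-valued on $\Omega_{R_1}$, and the paper takes $b_\ell:=p_\ell$ itself as the phase unknown, so $\nabla b_\ell=\partial_\ell p$ and no phase potential ever appears. The Caccioppoli remainder $\int(a_\ell^2+b_\ell^2)|\nabla\varphi_L|^2$ tends to $0$ simply because $\nabla\rho,p\to\mathbf 0$ uniformly. A degree part $d\,x^\perp/|x|^2$ is compatible with every estimate above, so no subtraction is needed.

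Two smaller points. Your tail energy must not contain $h^2$, since that is the conclusion; its terms are $|\nabla^2h|^2+|\nabla h|^2+|\nabla p|^2$. In your reduction step, $CR^{-2}\int_{R<|x|<2R}h^2$ cannot be absorbed into the left side, where $\chi_R<1$. It is instead bounded by $C\sup_{R<|x|<2R}h^2\to0$, which suffices.
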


Now, we prove Theorem~\ref{thm:main} by using Propositions~\ref{lem:maxmod} and ~\ref{prop:h-L2}.

\begin{proof}[Proof of Theorem~\ref{thm:main}]
Let $\rho:=|u|$. By Proposition~\ref{lem:maxmod}, we find that $0\le\rho\le1$.
Moreover, from Proposition~\ref{prop:h-L2}, we infer that $h=1-\rho\in L^2(\Omega_{R_0})$,
where $R_0\in(0,\infty)$ is as in Proposition~\ref{prop:h-L2}. Therefore,
$$
1-|u|^2=1-\rho^2=(1+\rho)(1-\rho)=(1+\rho)h\in L^2(\Omega_{R_0}).
$$
Furthermore, by $|\overline{B}_{R_0}|<\infty$ and the proved conclusion that $|u|\le1$ on $\mathbb{R}^2$ in
Proposition~\ref{lem:maxmod}, we conclude that $1-|u|^2\in L^2(\overline{B}_{R_0})$.
Thus, $1-|u|^2\in L^2(\mathbb{R}^2)$ and hence \eqref{e1.2} holds. This finishes the proof of
Theorem~\ref{thm:main}.
\end{proof}

\section{Some asymptotic properties}\label{s4}
To show Proposition \ref{prop:h-L2}, in this section we first establish
some asymptotic properties of  the solution to \eqref{eq:GL}.

\begin{lemma}\label{lem:grad-decay}
Let the solution $u\in C^\infty(\mathbb{R}^2;\mathbb{C})$ to \eqref{eq:GL}
satisfy \eqref{eq:modulus-limit}. Then
\begin{equation}\label{eq:grad-decay}
|\nabla u(x)|\rightarrow0\quad\hbox{as}\quad|x|\to\infty.
\end{equation}
\end{lemma}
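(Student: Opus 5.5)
We argue by elliptic regularity together with a compactness (translation) argument. By Proposition~\ref{lem:maxmod}, $|u|\le1$ on $\mathbb{R}^2$. Hence the right-hand side $f:=u(1-|u|^2)$ of \eqref{eq:GL} satisfies $|f|\le1$ on $\mathbb{R}^2$.

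\textbf{Step 1: uniform bounds.} Interior $W^{2,q}$ estimates on unit balls give, for every $x_0\in\mathbb{R}^2$ and $q<\infty$,
$$\|u\|_{W^{2,q}(B_1(x_0))}\le C_q\bigl(\|f\|_{L^q(B_2(x_0))}+\|u\|_{L^q(B_2(x_0))}\bigr)\le C_q,$$
with $C_q$ independent of $x_0$. Morrey's embedding then yields a uniform bound on $\|u\|_{C^{1,\alpha}(B_1(x_0))}$. Since $f$ is now uniformly $C^{1,\alpha}$, Schauder bootstrapping gives uniform $C^{k}$ bounds on $u$ for every $k$. In particular, $\nabla u$ is bounded and uniformly Lipschitz on $\mathbb{R}^2$.

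\textbf{Step 2: contradiction by blow-up along a sequence.} Suppose there exist $\varepsilon>0$ and $x_n$ with $|x_n|\to\infty$ and $|\nabla u(x_n)|\ge\varepsilon$. Set $u_n(x):=u(x+x_n)$. By the uniform $C^k$ bounds and Arzel\`a--Ascoli, a subsequence converges in $C^2_{\rm loc}(\mathbb{R}^2)$ to some $u_\infty$. The limit solves \eqref{eq:GL} and satisfies $|\nabla u_\infty(\mathbf 0)|\ge\varepsilon$.

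By \eqref{eq:modulus-limit}, $|u_n|\to1$ locally uniformly, so $|u_\infty|\equiv1$. Then $v=|u_\infty|^2\equiv1$, and \eqref{eq:Deltav} gives
$$0=\Delta v=2|\nabla u_\infty|^2+2v(v-1)=2|\nabla u_\infty|^2.$$
Thus $\nabla u_\infty\equiv0$, contradicting $|\nabla u_\infty(\mathbf 0)|\ge\varepsilon$. This proves \eqref{eq:grad-decay}.

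\textbf{Alternative Step 2.} One can avoid compactness. The uniform bound on $\nabla^2 u$ together with \eqref{eq:Deltav} shows that $\Delta v$ is small in an averaged sense wherever $v$ is uniformly close to $1$. To make this precise, integrate \eqref{eq:Deltav} against a cutoff $\varphi$ supported in $B_1(x_0)$:
$$2\int|\nabla u|^2\varphi=\int(v-1)\Delta\varphi-2\int v(v-1)\varphi\to0 \quad\text{as } |x_0|\to\infty.$$
This integral smallness of $|\nabla u|^2$, combined with the uniform Lipschitz bound on $\nabla u$, gives pointwise smallness.

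The only genuinely substantive step is Step 1, namely obtaining regularity bounds independent of the center $x_0$. It follows from the boundedness $|u|\le1$ and standard local elliptic estimates, so I expect no real obstacle there.
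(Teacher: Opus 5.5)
Your proof is correct and is essentially the paper's proof. You translate along a bad sequence, extract a locally convergent subsequence from uniform interior $W^{2,q}$ bounds (which come from $|u|\le1$), note that the limit has modulus identically $1$, and conclude from the identity for $\Delta|U|^2$ that its gradient vanishes. The only difference is that you bootstrap via Schauder to $C^2_{\rm loc}$ convergence, so the limit solves \eqref{eq:GL} classically and \eqref{eq:Deltav} applies directly; the paper stops at $C^1_{\rm loc}$ and weak $W^{2,q}_{\rm loc}$ convergence and instead shows the limit is weakly harmonic because $u_n(1-|u_n|^2)\to0$ locally uniformly (your compactness-free alternative Step 2 is also valid).
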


To prove Lemma \ref{lem:grad-decay}, we need the following conclusion.

\begin{lemma}\label{lem:interior}
Let $q\in(2,\infty)$ and $R\in(0,\infty)$. If $v\in W^{2,q}(B_{2R})$ satisfies
$-\Delta v=f$ in $B_{2R}$, then
\begin{align}\label{eq:W2q}
\left\|v\right\|_{W^{2,q}(B_R)}\le C_{q,R}\left[\left\|v\right\|_{L^q(B_{2R})}
+\left\|f\right\|_{L^q(B_{2R})}\right].
\end{align}
Consequently, if $\{v_n\}_{n\in\mathbb{N}}\subset W^{2,q}_{\mathrm{loc}}(\mathbb{R}^2)$ and, for
any given $R\in(0,\infty)$,
\begin{align}\label{e2.1}
\sup_{n\in\mathbb{N}}\left[\left\|v_n\right\|_{L^q(B_{2R})}
+\left\|\Delta v_n\right\|_{L^q(B_{2R})}\right]<\infty,
\end{align}
then there exists a subsequence of $\{v_n\}_{n\in\mathbb{N}}$ converging in
$C^1_{\mathrm{loc}}(\mathbb{R}^2)$.
\end{lemma}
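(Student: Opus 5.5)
The estimate \eqref{eq:W2q} is the classical interior Calder\'on--Zygmund estimate, and I would prove it by localisation. Fix a cut-off $\eta\in C_{\rm c}^\infty(B_{2R})$ with $\eta\equiv1$ on $B_R$, $0\le\eta\le1$, and $|\nabla\eta|+|\nabla^2\eta|\le C_R$. Then $w:=\eta v\in W^{2,q}(\mathbb{R}^2)$ has compact support and solves
\[
-\Delta w=\eta f-2\nabla\eta\cdot\nabla v-v\Delta\eta .
\]
The global Calder\'on--Zygmund inequality $\|\nabla^2 w\|_{L^q(\mathbb{R}^2)}\le C_q\|\Delta w\|_{L^q(\mathbb{R}^2)}$ holds for compactly supported $W^{2,q}$ functions; I take it as standard, via boundedness of the Riesz transforms. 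It gives
\[
\|\nabla^2 v\|_{L^q(B_R)}\le C_{q,R}\left[\|f\|_{L^q(B_{2R})}+\|\nabla v\|_{L^q(B_{2R})}+\|v\|_{L^q(B_{2R})}\right].
\]

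The main obstacle is the lower-order term $\|\nabla v\|_{L^q}$ on the larger ball. I would handle it in the usual way.
\begin{enumerate}[label=(\roman*)]
\item Apply the estimate above on a family of intermediate balls $B_s\subset B_t$ with $R\le s<t\le 2R$, using cut-offs whose derivatives are bounded by $(t-s)^{-1}$ and $(t-s)^{-2}$.
\item Control the gradient on $B_t$ by the interpolation inequality
\[
\|\nabla v\|_{L^q(B_t)}\le \varepsilon\|\nabla^2 v\|_{L^q(B_t)}+C\varepsilon^{-1}\|v\|_{L^q(B_t)}.
\]
\item Absorb the Hessian term with the standard iteration lemma for weighted seminorms of the form $\sup_{s}(2R-s)^2\|\nabla^2 v\|_{L^q(B_s)}$.
\end{enumerate}
Alternatively, one may simply cite the interior $L^q$ estimate from Gilbarg--Trudinger, Theorem~9.11. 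The full $W^{2,q}(B_R)$ norm is then bounded once more by interpolation.

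For the compactness statement, fix $R$ and apply \eqref{eq:W2q} with $f_n=-\Delta v_n$, which is legitimate because $v_n\in W^{2,q}(B_{2R})$. Combined with \eqref{e2.1}, this gives $\sup_n\|v_n\|_{W^{2,q}(B_R)}<\infty$. Since $q>2$, Morrey's embedding together with Arzel\`a--Ascoli makes $W^{2,q}(B_R)\hookrightarrow C^{1,\alpha}(\overline{B_R})\hookrightarrow C^1(\overline{B_R})$ compact, for $\alpha=1-2/q$. So for each $R$ there is a subsequence converging in $C^1(\overline{B_R})$.

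To pass to all of $\mathbb{R}^2$, take $R=k$ for $k\in\mathbb{N}$ and extract nested subsequences. The diagonal subsequence then converges in $C^1(\overline{B_k})$ for every $k$, which is exactly convergence in $C^1_{\mathrm{loc}}(\mathbb{R}^2)$.
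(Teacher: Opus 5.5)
Your proposal is correct and follows essentially the same route as the paper: the paper simply cites Gilbarg--Trudinger, Theorem~9.11, for \eqref{eq:W2q} (your localisation, interpolation and absorption sketch is the standard proof of that theorem), then uses the compact embedding $W^{2,q}(B_R)\hookrightarrow C^{1,\alpha}(\overline{B}_R)$ for $q>2$ and a diagonal argument over the balls $B_m$, exactly as you do. The only cosmetic difference is that the paper embeds compactly into $C^{1,\alpha}$ with $\alpha<1-2/q$, whereas you pass through $C^{1,1-2/q}$ via Morrey and reach $C^1(\overline{B}_R)$ by Arzel\`a--Ascoli; both yield the required $C^1_{\mathrm{loc}}$ convergence.
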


\begin{proof}
The estimate \eqref{eq:W2q} is the constant-coefficient special case of
\cite[Chapter~9, Theorem~9.11]{gt01}. Moreover, for any given $R\in(0,\infty)$, applying
\cite[Theorem~7.26(ii)]{gt01}, we find that, for
any fixed $\alpha\in(0,1-2/q)$, $W^{2,q}(B_R)$ is compactly imbedded into $C^{1,\alpha}(\overline{B}_R)$.
From this and the assumption \eqref{e2.1}, we deduce that there exists a subsequence
of $\{v_n\}_{n\in\mathbb{N}}$ converging in  $C^1(\overline{B}_R)$.  Applying this conclusion on
the ball $B_m$ with $m\in\mathbb{N}$ and taking a diagonal subsequence, we further
find that there exists a subsequence of $\{v_n\}_{n\in\mathbb{N}}$ converging in
$C^1_{\mathrm{loc}}(\mathbb{R}^2)$. This finishes the proof of Lemma \ref{lem:interior}.
\end{proof}

Now, we show Lemma \ref{lem:grad-decay} by using Proposition \ref{lem:maxmod}
and Lemma \ref{lem:interior}.

\begin{proof}[Proof of Lemma \ref{lem:grad-decay}]
We employ the method of contradiction to prove the present  lemma. Assume that \eqref{eq:grad-decay} fails.
Then there exist a constant $\varepsilon_0\in(0,\infty)$ and a sequence $\{x_n\}_{n\in\mathbb{N}}$
of points in $\mathbb{R}^2$ such that $|x_n|\to\infty$ as $n\to\infty$, but
$|\nabla u(x_n)|\ge\varepsilon_0$ for any $n\in\mathbb{N}$. For any given
$n\in\mathbb{N}$, define $u_n(y):=u(x_n+y)$ for any $y\in\mathbb{R}^2$.
By Proposition \ref{lem:maxmod}, we find that, for any $n\in\mathbb{N}$, $|u_n|\le1$,
\begin{equation}\label{e3.1}
-\Delta u_n=f_n:=u_n\left(1-|u_n|^2\right),
 \quad \text{and}\quad \|f_n\|_{L^\infty(\mathbb{R}^2;\mathbb{C})}\le1.
\end{equation}
Fix $q\in(2,\infty)$. Then, from \eqref{e3.1}, Lemma~\ref{lem:interior}, and
the weak compactness of  $W^{2,q}(\mathbb{R}^2;\mathbb{C})$, we deduce that there exist a subsequence
of $\{u_n\}_{n\in\mathbb{N}}$, still denoted by $\{u_n\}_{n\in\mathbb{N}}$, and $U\in W^{2,q}_{\mathrm{loc}}
(\mathbb{R}^2;\mathbb{C})\cap C^1_{\mathrm{loc}}(\mathbb{R}^2;\mathbb{C})$
such that $u_n\to U$ in $C^1_{\mathrm{loc}}(\mathbb{R}^2;\mathbb{C})$
and weakly in $W^{2,q}_{\mathrm{loc}}(\mathbb{R}^2;\mathbb{C})$ as $n\to\infty$.

Moreover, by the fact that $|x_n|\to\infty$ as $n\to\infty$, we conclude that, for any given compact set
$K\subset\mathbb{R}^2$, $\inf_{y\in K}|x_n+y|\to\infty$ as $n\to\infty$. This, combined with
\eqref{eq:modulus-limit}, implies that $|u_n|\to1$ uniformly on $K$ as $n\to\infty$, and
hence $|U|\equiv1$ on $K$. Therefore, $|U|\equiv1$ on $\mathbb{R}^2$. Meanwhile,
from \eqref{eq:modulus-limit} and the proved fact that $|u_n|\le1$ for any $n\in\mathbb{N}$,
it follows that $f_n\to0$ uniformly on $K$ as $n\to\infty$. Therefore, for any
$\varphi\in C_{\rm c}^\infty(\mathbb{R}^2;\mathbb{C})$,
$$
\int_{\mathbb{R}^2}U\Delta\varphi\,dy=\lim_{n\to\infty}\int_{\mathbb{R}^2}u_n\Delta\varphi\,dy
=-\lim_{n\to\infty}\int_{\mathbb{R}^2}f_n\varphi\,dy=0.
$$
Thus, $\Delta U=0$ in the sense of distributions.  Since $U\in W^{2,q}_{\mathrm{loc}}(\mathbb{R}^2;\mathbb{C})$,
it follows that
$$
 \frac12\Delta|U|^2=|\nabla U|^2+\mathrm{Re}(\overline{U}\Delta U)=|\nabla U|^2.
$$
Furthermore, by $|U|\equiv1$, we find that $|\nabla U|=0$ almost everywhere  and hence
everywhere by continuity.  On the other hand, from the proved fact that $u_n\to U$ in
$C^1_{\mathrm{loc}}(\mathbb{R}^2;\mathbb{C})$ as $n\to\infty$, we deduce that
$|\nabla U({\bf0})|=\lim_{n\to\infty}|\nabla u(x_n)|\ge\varepsilon_0$,
which contradicts $|\nabla U({\bf0})|=0$. Thus, \eqref{eq:grad-decay} holds. This finishes
the proof of Lemma \ref{lem:grad-decay}.
\end{proof}

Let $u\in C^\infty(\mathbb{R}^2;\mathbb{C})$ satisfy \eqref{eq:modulus-limit}.
Choose $R_1\in(0,\infty)$ sufficiently large such that, for any $x\in\Omega_{R_1}$,
\begin{equation}\label{eq:rho-lower}
|u(x)|\ge\frac12.
\end{equation}
On $\Omega_{R_1}$, for $j\in\{1,2\}$, let
\begin{equation}\label{eq:rho-p-def}
\rho:=|u|,\quad p_j:=\frac{\mathrm{Im}(\overline u\,\partial_ju)}{\rho^2},\quad
\text{and} \quad p:=(p_1,p_2).
\end{equation}

\begin{lemma}\label{lem:sigma}
Let the solution $u\in C^\infty(\mathbb{R}^2;\mathbb{C})$ to \eqref{eq:GL} satisfy \eqref{eq:modulus-limit},
and let $R_1$ be as in \eqref{eq:rho-p-def}. Assume that $\rho$ and $p$ are as in \eqref{eq:rho-p-def} and
$\sigma:=\frac{u}\rho$ on $\Omega_{R_1}.$
Then $\sigma\in C^\infty(\Omega_{R_1};\mathbb{S}^1)$ and, on $\Omega_{R_1}$,
for $j\in\{1,2\}$,
\begin{equation}\label{eq:sigma-derivative}
\overline\sigma\,\partial_j\sigma=i p_j\quad\text{and}\quad
 \partial_j\sigma=i p_j\sigma.
\end{equation}
Consequently, on $\Omega_{R_1}$,
\begin{equation}\label{eq:sigma-Laplacian}
\Delta\sigma=\left(i\,\mathrm{div} p-|p|^2\right)\sigma\quad\text{and}\quad
\mathrm{curl} p=0.
\end{equation}
\end{lemma}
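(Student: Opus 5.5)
The plan is a direct computation on $\Omega_{R_1}$, where \eqref{eq:rho-lower} gives $\rho\ge 1/2$.

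\emph{Smoothness of $\sigma$.} Since $u\in C^\infty$, the function $\rho^2=u\overline u$ is smooth. On $\Omega_{R_1}$ we have $\rho^2\ge 1/4>0$, so $\rho=\sqrt{\rho^2}$ is smooth there, because the square root is smooth on $(0,\infty)$. Hence $\sigma=u/\rho$ is smooth, and $|\sigma|=1$ by construction.

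\emph{First identity.} Differentiating $\sigma\overline\sigma=1$ shows that $\mathrm{Re}(\overline\sigma\,\partial_j\sigma)=0$. For the imaginary part, write $\partial_j\sigma=\partial_j u/\rho-u\,\partial_j\rho/\rho^2$. Then
$$
\overline\sigma\,\partial_j\sigma=\frac{\overline u\,\partial_j u}{\rho^2}-\frac{\partial_j\rho}{\rho}.
$$
The second term is real, so
$$
\mathrm{Im}(\overline\sigma\,\partial_j\sigma)=\frac{\mathrm{Im}(\overline u\,\partial_j u)}{\rho^2}=p_j.
$$
Combining the two parts gives $\overline\sigma\,\partial_j\sigma=ip_j$. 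Multiplying by $\sigma$ and using $|\sigma|^2=1$ yields $\partial_j\sigma=ip_j\sigma$. This also shows that $p$ is smooth on $\Omega_{R_1}$.

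\emph{Laplacian.} Differentiate $\partial_j\sigma=ip_j\sigma$ once more:
$$
\partial_j^2\sigma=i(\partial_jp_j)\sigma+ip_j\,\partial_j\sigma=i(\partial_jp_j)\sigma-p_j^2\sigma.
$$
Summing over $j$ gives $\Delta\sigma=(i\,\mathrm{div}\,p-|p|^2)\sigma$.

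\emph{Curl.} Use the equality of mixed partials $\partial_1\partial_2\sigma=\partial_2\partial_1\sigma$, valid since $\sigma$ is smooth. On one side,
$$
\partial_1(ip_2\sigma)=i(\partial_1p_2)\sigma-p_1p_2\sigma,
$$
and on the other,
$$
\partial_2(ip_1\sigma)=i(\partial_2p_1)\sigma-p_1p_2\sigma.
$$
Subtracting gives $i(\partial_1p_2-\partial_2p_1)\sigma=0$. Since $|\sigma|=1$ and $p$ is real, $\mathrm{curl}\,p=0$.

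There is no genuine obstacle. The only point needing care is the smoothness of $\rho$, which relies on the lower bound \eqref{eq:rho-lower}: this is what keeps $\rho$ away from $0$, where $\sqrt{\cdot}$ fails to be smooth. Everything else is routine algebra.
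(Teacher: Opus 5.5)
Your proposal is correct and follows essentially the same route as the paper: the paper also combines $\mathrm{Re}(\overline\sigma\,\partial_j\sigma)=0$ with the observation that the real term $\rho\,\partial_j\rho$ drops out of $\mathrm{Im}(\overline u\,\partial_j u)$. It then differentiates $\partial_j\sigma=ip_j\sigma$ to get the Laplacian and uses equality of mixed partials to get $\mathrm{curl}\,p=0$. Beyond the paper, you justify explicitly that $\rho$, $\sigma$, and $p$ are smooth on $\Omega_{R_1}$ because $\rho^2\ge 1/4$ there, and you write out the cancellation of the $p_1p_2\sigma$ terms; the paper leaves both implicit.
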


\begin{proof}
Since $|\sigma|^2=1$, it follows that, for $j\in\{1,2\}$,
$\mathrm{Re}(\overline\sigma\,\partial_j\sigma)=0$. By the definition that $u=\rho\sigma$,
we conclude that, for $j\in\{1,2\}$,
$$
\overline u\,\partial_ju=\rho\partial_j\rho+\rho^2\overline\sigma\,\partial_j\sigma,
$$
which, combined with \eqref{eq:rho-p-def} and the fact that the term $\rho\partial_j\rho$
is real, implies that, for $j\in\{1,2\}$, $\overline\sigma\,\partial_j\sigma=i p_j$.
This, together with the fact that $|\sigma|=1=\overline\sigma\sigma$, further yields
$\partial_j\sigma=i p_j\sigma$ for $j\in\{1,2\}$. Therefore, \eqref{eq:sigma-derivative} holds.

Furthermore, using the second identity in \eqref{eq:sigma-derivative}, we further obtain
$$
\Delta\sigma=\sum_{j=1}^2\partial_j(i p_j\sigma)
=i(\mathrm{div}p)\sigma-|p|^2\sigma\quad\text{on}\quad\Omega_{R_1}.
$$
Meanwhile,  by the fact that $\partial_1\partial_2\sigma=\partial_2\partial_1\sigma$
on $\Omega_{R_1}$ and \eqref{eq:sigma-derivative} again, we find that
$$
0=\partial_1\partial_2\sigma-\partial_2\partial_1\sigma
=i(\partial_1p_2-\partial_2p_1)\sigma,
$$
which, combined with $|\sigma|=1$, implies that $\mathrm{curl} p=0$ on $\Omega_{R_1}$.
Therefore, \eqref{eq:sigma-Laplacian} holds. This finishes the proof of Lemma \ref{lem:sigma}.
\end{proof}

\begin{lemma}\label{lem:amplitude-phase}
Let $R_1$ be as in \eqref{eq:rho-p-def} and $\rho$ and $p$ be as in \eqref{eq:rho-p-def}.
Then, on $\Omega_{R_1}$,
\begin{align}
-\Delta\rho+\rho|p|^2=\rho(1-\rho^2)\label{eq:amplitude}
\end{align}
and
\begin{align}
\mathrm{div}(\rho^2p)=0.\label{eq:phase-div}
\end{align}
Moreover,
\begin{equation}\label{eq:rho-p-limits}
\rho\to1,\quad \nabla\rho\to{\bf0},\quad\text{and}\quad p\to{\bf0}\quad\hbox{uniformly as}\quad|x|\to\infty.
\end{equation}
\end{lemma}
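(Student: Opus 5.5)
The plan is to substitute the polar decomposition $u=\rho\sigma$ from Lemma~\ref{lem:sigma} into \eqref{eq:GL} and split into components along $\sigma$ and $i\sigma$.

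On $\Omega_{R_1}$ we have $\rho\ge 1/2>0$ by \eqref{eq:rho-lower}, so $\rho=(u\overline u)^{1/2}$ is smooth there. The product rule gives
$$
\Delta u=(\Delta\rho)\sigma+2\nabla\rho\cdot\nabla\sigma+\rho\Delta\sigma .
$$
Inserting \eqref{eq:sigma-derivative} and \eqref{eq:sigma-Laplacian} turns this into
$$
\Delta u=\left[\Delta\rho-\rho|p|^2+i\left(2\nabla\rho\cdot p+\rho\,\mathrm{div}p\right)\right]\sigma .
$$
By \eqref{eq:GL}, $\Delta u=-\rho(1-\rho^2)\sigma$. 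Multiply both sides by $\overline\sigma$, using $|\sigma|=1$, and compare real and imaginary parts. The real part is exactly \eqref{eq:amplitude}. The imaginary part reads $2\rho\nabla\rho\cdot p+\rho^2\mathrm{div}p=0$ after multiplying by $\rho$, and this is $\mathrm{div}(\rho^2p)=0$, i.e. \eqref{eq:phase-div}.

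For the limits \eqref{eq:rho-p-limits}:
\begin{itemize}
\item $\rho\to1$ is exactly the assumption \eqref{eq:modulus-limit}.
\item For the gradient, on $\Omega_{R_1}$ we have $\nabla\rho=\mathrm{Re}(\overline u\nabla u)/\rho$, so $|\nabla\rho|\le|\nabla u|$. Lemma~\ref{lem:grad-decay} then gives $\nabla\rho\to\mathbf 0$.
\item For the phase gradient, $|p_j|\le|u||\partial_ju|/\rho^2=|\partial_ju|/\rho\le 2|\partial_j u|$ by \eqref{eq:rho-lower}, so Lemma~\ref{lem:grad-decay} again gives $p\to\mathbf 0$.
\end{itemize}
In each case the convergence as $|x|\to\infty$ holds in the sup-over-$\{|x|\ge R\}$ sense, which is the stated uniformity.

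The only step needing care is the bookkeeping in expanding $\Delta(\rho\sigma)$: the cross term $2\nabla\rho\cdot\nabla\sigma=2i(\nabla\rho\cdot p)\sigma$ is purely imaginary relative to $\sigma$. That is why no $\nabla\rho$ term appears in the amplitude equation, and why it combines with $\rho\,\mathrm{div}p$ into the divergence form. Everything else is immediate from the earlier lemmas.
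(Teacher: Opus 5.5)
Your proposal is correct and follows the paper's proof essentially step for step. You expand $\Delta(\rho\sigma)$ using Lemma~\ref{lem:sigma}, multiply by $\overline\sigma$, and split into real and imaginary parts to get \eqref{eq:amplitude} and, after multiplying by $\rho$, \eqref{eq:phase-div}; the limits then follow from \eqref{eq:modulus-limit}, the bounds $|\nabla\rho|\le|\nabla u|$ and $|p_j|\le|\partial_ju|/\rho\le 2|\partial_ju|$, and Lemma~\ref{lem:grad-decay}.
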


\begin{proof}
Using $u=\rho\sigma$, \eqref{eq:sigma-derivative}, and \eqref{eq:sigma-Laplacian}, we find that
$$
\Delta u=\sigma\bigl[\Delta\rho-\rho|p|^2
+i(2\nabla\rho\cdot p+\rho\mathrm{div}p)\bigr].
$$
Substituting this equation into \eqref{eq:GL}, multiplying by $\overline\sigma$,
and then comparing the real and the imaginary parts yield \eqref{eq:amplitude} and the identity
$2\nabla\rho\cdot p+\rho\mathrm{div}p=0$.  Multiplying the latter identity by
$\rho$ gives \eqref{eq:phase-div}.

The first limit in \eqref{eq:rho-p-limits} is precisely \eqref{eq:modulus-limit}.
Moreover, since $\rho>0$ on $\Omega_{R_1}$ and, for $j\in\{1,2\}$,
$$\partial_j\rho=\frac{u\cdot\partial_ju}{|u|}\quad\text{and}\quad
|p_j|\le\frac{|\partial_ju|}{\rho},
$$
it follows from Lemma~\ref{lem:grad-decay} and \eqref{eq:rho-lower} that the remaining
two uniform limits in \eqref{eq:rho-p-limits} hold. This finishes the proof of Lemma
\ref{lem:amplitude-phase}.
\end{proof}

\section{Translation Jacobi system and finite tail energy}\label{s5}

In this section, we study the translation Jacobi system for the derivatives of
the amplitude and the phase. Precisely, fix $\ell\in\{1,2\}$ and define the real-valued functions
\begin{equation}\label{eq:Jacobi-definitions}
h:=1-\rho,\quad a_\ell:=\partial_\ell h=-\partial_\ell\rho,\quad
b_\ell:=p_\ell,\quad\mathfrak s:=|p|^2,\quad
\text{and}\quad m:=\mathfrak s-1+3\rho^2.
\end{equation}
Based on \eqref{eq:rho-p-limits}, choose $R_1\in(0,\infty)$ sufficiently large such that
\begin{equation}\label{eq:tail-smallness}
\rho\ge\frac34,\quad|p|\le\frac14,
\end{equation}
and
\begin{equation}\label{eq:mass-Schur}
m-4\mathfrak s=-1+3\rho^2-3|p|^2\ge\frac12
\quad\text{on}\quad \Omega_{R_1}.
\end{equation}

\begin{lemma}\label{lem:Jacobi-system}
Let $R_1$ be as in \eqref{eq:mass-Schur}, $\ell\in\{1,2\}$, and $h$, $a_\ell$, $b_\ell$,
and $m$ be as in \eqref{eq:Jacobi-definitions}. Then, on $\Omega_{R_1}$,
\begin{align}
(-\Delta+m)a_\ell=2\rho p\cdot\nabla b_\ell\label{eq:J1}
\end{align}
and
\begin{align}
\mathrm{div}\left(\rho^2\nabla b_\ell-2\rho a_\ell p\right)=0.\label{eq:J2}
\end{align}
\end{lemma}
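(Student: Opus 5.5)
The plan is to differentiate the amplitude--phase system of Lemma~\ref{lem:amplitude-phase} in the direction $x_\ell$ and use the curl-free property of $p$ from Lemma~\ref{lem:sigma} to rewrite $\partial_\ell p$ as $\nabla p_\ell$. On $\Omega_{R_1}$ we have $\rho\ge 3/4>0$, so $\rho=|u|$ is smooth there. The components $p_j$ are then smooth as well, being quotients of smooth functions with nonvanishing denominator. Hence all the differentiations below are classical.

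For \eqref{eq:J1}, apply $\partial_\ell$ to \eqref{eq:amplitude}, written as $-\Delta\rho+\rho|p|^2=\rho-\rho^3$. Since $\partial_\ell$ commutes with $\Delta$, this gives
\begin{align*}
-\Delta(\partial_\ell\rho)+(\partial_\ell\rho)|p|^2+2\rho\,p\cdot\partial_\ell p=(1-3\rho^2)\partial_\ell\rho .
\end{align*}
Collecting the $\partial_\ell\rho$ terms produces the coefficient $|p|^2-1+3\rho^2=m$. Substituting $a_\ell=-\partial_\ell\rho$ then yields $(-\Delta+m)a_\ell=2\rho\,p\cdot\partial_\ell p$. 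It remains to identify the right-hand side. By \eqref{eq:sigma-Laplacian} we have $\partial_1p_2=\partial_2p_1$, i.e.\ $\partial_\ell p_k=\partial_k p_\ell$ for all $k,\ell\in\{1,2\}$. Therefore
\begin{align*}
p\cdot\partial_\ell p=\sum_{k=1}^2 p_k\,\partial_k p_\ell=p\cdot\nabla b_\ell ,
\end{align*}
which gives \eqref{eq:J1}.

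For \eqref{eq:J2}, apply $\partial_\ell$ to \eqref{eq:phase-div}, again commuting it with the divergence. This gives
\begin{align*}
0=\mathrm{div}\bigl(2\rho\,\partial_\ell\rho\,p+\rho^2\partial_\ell p\bigr).
\end{align*}
Using $\partial_\ell\rho=-a_\ell$, together with the same curl-free identity $\partial_\ell p=\nabla p_\ell=\nabla b_\ell$, turns this into $\mathrm{div}(\rho^2\nabla b_\ell-2\rho a_\ell p)=0$.

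There is no real obstacle here. The only point requiring care is the use of $\mathrm{curl}\,p=0$, which converts $\partial_\ell p$ into the gradient $\nabla b_\ell$ in both equations. Beyond that, one only needs to keep track of signs in $a_\ell=-\partial_\ell\rho$.
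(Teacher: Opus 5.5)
Your proposal is correct and follows the paper's proof: differentiate \eqref{eq:amplitude} and \eqref{eq:phase-div} in $x_\ell$, and use $\mathrm{curl}\,p=0$ to replace $\partial_\ell p$ by $\nabla b_\ell$. Your sign bookkeeping with $a_\ell=-\partial_\ell\rho$ and your identification $p\cdot\partial_\ell p=p\cdot\nabla b_\ell$ are both right.
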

\begin{proof}
Let $\ell,j\in\{1,2\}$. Differentiating \eqref{eq:amplitude} with respect to $x_\ell$
and using the fact that $\operatorname{curl} p=0$, we find that
$\partial_\ell p_j=\partial_j p_\ell=\partial_j b_\ell$ and hence $\partial_\ell p=\nabla b_\ell$.
Consequently,
$$
-\Delta(\partial_\ell\rho)+\left(\mathfrak{s}-1+3\rho^2\right)\partial_\ell\rho
+2\rho\, p\cdot\nabla b_\ell=0.
$$
Substituting $a_\ell=-\partial_\ell\rho$ into the above equation yields \eqref{eq:J1}.
Similarly, differentiating \eqref{eq:phase-div} and using $\operatorname{curl} p=0$ again,
we conclude that
$$
\operatorname{div}\left(2\rho(\partial_\ell\rho)p+\rho^2\nabla b_\ell\right)=0,
$$
which is precisely \eqref{eq:J2}. This finishes the proof of Lemma \ref{lem:Jacobi-system}.
\end{proof}

\begin{lemma}\label{lem:Schur}
Let $\rho$ and $p$ be as in \eqref{eq:rho-p-def}, $m$ and $\mathfrak s$ be as in \eqref{eq:Jacobi-definitions},
and $R_1$ be as in \eqref{eq:mass-Schur}. Then, for any $A\in\mathbb{R}$ and $X,Y\in\mathbb{R}^2$,
\begin{align}\label{eq:Schur}
|X|^2+\rho^2\left|Y-\frac{2A}{\rho}p\right|^2+(m-4\mathfrak s)A^2
\ge\frac14\left(|X|^2+|Y|^2+A^2\right)\quad\text{on}\quad\Omega_{R_1}.
\end{align}
\end{lemma}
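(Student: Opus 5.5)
The plan is to prove \eqref{eq:Schur} pointwise on $\Omega_{R_1}$ by an elementary Young-inequality argument, treating the three terms on the left separately. The only inputs are the smallness conditions \eqref{eq:tail-smallness} and \eqref{eq:mass-Schur}, that is,
$$
\rho\ge\tfrac34,\qquad |p|\le\tfrac14,\qquad m-4\mathfrak s\ge\tfrac12 .
$$
No structure of the equation is needed beyond these bounds.

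\textbf{Step 1 (the $X$-term).} Since $|X|^2\ge\frac14|X|^2$, this term can be discarded up to the required factor.

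\textbf{Step 2 (the mixed term).} First rewrite
$$
\rho^2\left|Y-\frac{2A}{\rho}p\right|^2=|\rho Y-2Ap|^2 .
$$
Then apply the elementary inequality $|a-b|^2\ge\frac12|a|^2-|b|^2$, which follows from $2|a\cdot b|\le\frac12|a|^2+2|b|^2$. This gives
$$
|\rho Y-2Ap|^2\ge\tfrac12\rho^2|Y|^2-4A^2|p|^2 .
$$
Using $\rho\ge\frac34$ and $|p|\le\frac14$, this is at least
$$
\tfrac{9}{32}|Y|^2-\tfrac14A^2 .
$$

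\textbf{Step 3 (absorbing the negative $A^2$ term).} The loss $-\frac14A^2$ from Step 2 is absorbed by the mass term, since $(m-4\mathfrak s)A^2\ge\frac12A^2$. The net coefficient of $A^2$ is therefore $\frac12-\frac14=\frac14$. The coefficient of $|Y|^2$ is $\frac9{32}\ge\frac14$, and the coefficient of $|X|^2$ is at least $\frac14$. Adding the three bounds yields \eqref{eq:Schur}.

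The only point needing care is the weight in the Young inequality. It must leave a positive fraction of $\rho^2|Y|^2$ while keeping the cross-term loss $4A^2|p|^2$ below the margin $\frac12A^2$ supplied by \eqref{eq:mass-Schur}. The choice $\varepsilon=\frac12$ above does both, thanks to $|p|\le\frac14$, so I expect no real obstacle.
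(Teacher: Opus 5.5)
Your proof is correct and is essentially the paper's argument. The paper sets $Y_0:=Y-(2A/\rho)p$ and uses $|Y|^2\le 2|Y_0|^2+\frac89A^2$, which is the same elementary inequality $|a-b|^2\ge\frac12|a|^2-|b|^2$ that you apply to $\rho Y$ and $2Ap$; after multiplying by $\rho^2\ge\frac{9}{16}$ it yields exactly your bound $\frac{9}{32}|Y|^2-\frac14A^2$, which is then absorbed by $(m-4\mathfrak s)A^2\ge\frac12A^2$.
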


\begin{proof}
Fix $A\in\mathbb{R}$ and $X,Y\in\mathbb{R}^2$ and let $Y_0:=Y-(2A/\rho)p$.
Then, by \eqref{eq:tail-smallness}, we conclude that $|(2A/\rho)p|\le(2/3)|A|$ on
$\Omega_{R_1}$, and hence
$$
|Y|^2\le2|Y_0|^2+\frac89A^2,
$$
which further implies that
$$
|Y|^2+A^2\le2|Y_0|^2+\frac{17}{9}A^2\le4\left(\frac9{16}|Y_0|^2+\frac12A^2\right).
$$
From this, \eqref{eq:tail-smallness}, and \eqref{eq:mass-Schur}, we infer that
\eqref{eq:Schur} holds on $\Omega_{R_1}$. This finishes the proof of Lemma \ref{lem:Schur}.
\end{proof}

\begin{lemma}\label{lem:Caccioppoli}
Let $a_\ell$ and $b_\ell$ with $\ell\in\{1,2\}$ be as in \eqref{eq:Jacobi-definitions} and
$R_1$ be as in \eqref{eq:mass-Schur}. Then there exists a positive constant $C$ such that, for any
$\varphi\in C_{\rm c}^\infty(\Omega_{R_1};\mathbb{R})$,
\begin{equation}\label{eq:Caccioppoli}
\int_{\Omega_{R_1}}\varphi^2\left(|\nabla a_\ell|^2+a_\ell^2+|\nabla b_\ell|^2\right)\,dx
\le C\int_{\Omega_{R_1}}(a_\ell^2+b_\ell^2)|\nabla\varphi|^2\,dx.
\end{equation}
\end{lemma}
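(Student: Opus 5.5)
Test \eqref{eq:J1} against $\varphi^2a_\ell$ and \eqref{eq:J2} against $\varphi^2b_\ell$, add the two identities so that the quadratic form of Lemma~\ref{lem:Schur} appears with $X=\nabla a_\ell$, $Y=\nabla b_\ell$, $A=a_\ell$, and absorb the cutoff terms by Young's inequality. All functions are smooth on $\Omega_{R_1}$ and $\varphi$ has compact support there, so every integration by parts below has no boundary terms. Write $a=a_\ell$, $b=b_\ell$.

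\emph{First identity.} Multiply \eqref{eq:J1} by $\varphi^2a$ and integrate by parts:
\begin{align*}
\int\varphi^2\left(|\nabla a|^2+ma^2\right)dx+2\int\varphi a\nabla a\cdot\nabla\varphi\,dx
=2\int\varphi^2\rho a\,p\cdot\nabla b\,dx.
\end{align*}
\emph{Second identity.} Multiply \eqref{eq:J2} by $\varphi^2b$ and integrate by parts:
\begin{align*}
\int\varphi^2\left(\rho^2|\nabla b|^2-2\rho a\,p\cdot\nabla b\right)dx
+2\int\varphi b\left(\rho^2\nabla b-2\rho a p\right)\cdot\nabla\varphi\,dx=0.
\end{align*}

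\emph{Adding.} Adding the two identities, the bulk terms are
$$
\int\varphi^2\left[|\nabla a|^2+\rho^2|\nabla b|^2-4\rho a\,p\cdot\nabla b+ma^2\right]dx .
$$
Completing the square gives
$$
\rho^2|\nabla b|^2-4\rho a\,p\cdot\nabla b=\rho^2\left|\nabla b-\tfrac{2a}{\rho}p\right|^2-4a^2|p|^2 ,
$$
so the integrand equals the left-hand side of \eqref{eq:Schur} with $X=\nabla a$, $Y=\nabla b$, $A=a$, using $\mathfrak s=|p|^2$. By Lemma~\ref{lem:Schur}, the bulk terms are at least
$$
\frac14\int\varphi^2\left(|\nabla a|^2+|\nabla b|^2+a^2\right)dx .
$$
The coercivity in Lemma~\ref{lem:Schur} exists precisely to handle this sign-indefinite cross term, which is why the two equations must be tested in exactly this pairing.

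\emph{Cutoff terms.} What remains is
$$
-2\int\varphi\left[a\nabla a+\rho^2b\nabla b-2\rho ab\,p\right]\cdot\nabla\varphi\,dx .
$$
Using $\rho\le1$ and $|p|\le1/4$ from \eqref{eq:tail-smallness}, each piece is bounded by $\varepsilon\int\varphi^2(|\nabla a|^2+|\nabla b|^2+a^2)\,dx+C_\varepsilon\int(a^2+b^2)|\nabla\varphi|^2\,dx$:
\begin{itemize}
\item for $\varphi a\nabla a\cdot\nabla\varphi$ and $\varphi\rho^2 b\nabla b\cdot\nabla\varphi$, apply Young's inequality directly;
\item for the term $\varphi\rho ab\,p\cdot\nabla\varphi$, bound it by $\varepsilon\varphi^2a^2+C_\varepsilon b^2|\nabla\varphi|^2$.
\end{itemize}
Choosing $\varepsilon=1/16$ and absorbing these terms into the left side yields \eqref{eq:Caccioppoli} with an absolute constant $C$.
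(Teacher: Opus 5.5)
Your proposal is correct and is essentially the paper's proof. You test \eqref{eq:J1} with $\varphi^2a_\ell$ and \eqref{eq:J2} with $\varphi^2b_\ell$, add the two identities, and complete the square so that Lemma~\ref{lem:Schur} gives the lower bound $\frac14(|\nabla a_\ell|^2+|\nabla b_\ell|^2+a_\ell^2)$. You then absorb the three cutoff terms by Young's inequality; your choice $\varepsilon=1/16$ works just as well as the paper's $1/24$. One minor attribution point: the bound $\rho\le1$ comes from Proposition~\ref{lem:maxmod}, not from \eqref{eq:tail-smallness}, although any uniform bound on $\rho$ suffices here.
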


\begin{proof}
Let $\ell\in\{1,2\}$ and $\varphi\in C_{\rm c}^\infty(\Omega_{R_1};\mathbb{R})$.
Note that $a_\ell$ and $b_\ell$ satisfy the equation \eqref{eq:J1}. Taking
$\varphi^2a_\ell$ as a test function, we then obtain
\begin{align}
\int_{\Omega_{R_1}}\varphi^2|\nabla a_\ell|^2\,dx+\int_{\Omega_{R_1}} m\varphi^2a_\ell^2\,dx
-2\int_{\Omega_{R_1}}\rho\varphi^2a_\ell p\cdot\nabla b_\ell\,dx
+2\int_{\Omega_{R_1}}\varphi a_\ell\nabla a_\ell\cdot\nabla\varphi\,dx=0.
\label{eq:test-a}
\end{align}
Moreover, using the fact that $a_\ell$ and $b_\ell$ satisfy \eqref{eq:J2} and
taking $\varphi^2b_\ell$ as a test function in \eqref{eq:J2}, we find that
\begin{align}
&\int_{\Omega_{R_1}}\rho^2\varphi^2|\nabla b_\ell|^2\,dx-2\int_{\Omega_{R_1}}
\rho\varphi^2a_\ell p\cdot\nabla b_\ell\,dx\notag\\
 &\quad+2\int_{\Omega_{R_1}}\rho^2\varphi b_\ell\nabla b_\ell\cdot\nabla\varphi\,dx
 -4\int_{\Omega_{R_1}}\rho\varphi a_\ell b_\ell p\cdot\nabla\varphi\,dx=0.
\label{eq:test-b}
\end{align}
Then, from \eqref{eq:test-a} and \eqref{eq:test-b}, it follows that
\begin{align}\label{e4.1}
&\int_{\Omega_{R_1}}\varphi^2\left(|\nabla a_\ell|^2+ma_\ell^2+\rho^2|\nabla b_\ell|^2-
4\rho a_\ell p\cdot\nabla b_\ell\right)\,dx\notag\\
&\quad=-2\int_{\Omega_{R_1}}\varphi a_\ell\nabla a_\ell\cdot\nabla\varphi\,dx
-2\int_{\Omega_{R_1}}\rho^2\varphi b_\ell\nabla b_\ell\cdot\nabla\varphi\,dx\notag\\
&\qquad+4\int_{\Omega_{R_1}}\rho\varphi a_\ell b_\ell p\cdot\nabla\varphi\,dx.
\end{align}
By \eqref{eq:Schur}, we find that, on $\Omega_{R_1}$,
\begin{align}\label{e4.2}
|\nabla a_\ell|^2+ma_\ell^2+\rho^2|\nabla b_\ell|^2-4\rho a_\ell p\cdot\nabla b_\ell
&=|\nabla a_\ell|^2+\rho^2\left|\nabla b_\ell-\frac{2a_\ell}{\rho}p\right|^2
+(m-4\mathfrak s)a_\ell^2\notag\\
&\ge\frac14\left(|\nabla a_\ell|^2+|\nabla b_\ell|^2+a_\ell^2\right).
\end{align}
Moreover, from Young's inequality, we infer that,  on $\Omega_{R_1}$, for any $\varepsilon\in(0,\infty)$,
\begin{align}\label{e4.3}
2|\varphi a_\ell\nabla a_\ell\cdot\nabla\varphi|
&\le\varepsilon\varphi^2|\nabla a_\ell|^2
 +C_\varepsilon a_\ell^2|\nabla\varphi|^2,\notag\\
2\rho^2|\varphi b_\ell\nabla b_\ell\cdot\nabla\varphi|
&\le\varepsilon\varphi^2|\nabla b_\ell|^2
 +C_\varepsilon b_\ell^2|\nabla\varphi|^2,\ \ \text{and}\notag\\
4\rho|p|\,|\varphi a_\ell b_\ell|\,|\nabla\varphi|
&\le\varepsilon\varphi^2a_\ell^2
 +C_\varepsilon b_\ell^2|\nabla\varphi|^2.
\end{align}
Letting $\varepsilon:=1/24$ in \eqref{e4.3} and using \eqref{e4.1}, \eqref{e4.2},
and \eqref{e4.3}, we conclude that \eqref{eq:Caccioppoli} holds.
This finishes the proof of Lemma \ref{lem:Caccioppoli}.
\end{proof}

\begin{proposition}\label{prop:tail-energy}
Let $\ell\in\{1,2\}$, $a_\ell$ and $b_\ell$ be as in \eqref{eq:Jacobi-definitions}, and
$R_1$ be as in \eqref{eq:mass-Schur}. Then there exists $R_2\in(R_1,\infty)$ such that
\begin{equation}\label{eq:tail-energy}
\int_{\Omega_{R_2}}\left(|\nabla a_\ell|^2+a_\ell^2+|\nabla b_\ell|^2\right)\,dx<\infty.
\end{equation}
\end{proposition}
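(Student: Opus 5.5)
The idea is to apply the Caccioppoli inequality \eqref{eq:Caccioppoli} with a family of cutoffs adapted to two-dimensional scaling. In the plane, a logarithm-free cutoff on the annulus $\{R<|x|<2R\}$ has $\int|\nabla\varphi|^2\,dx$ bounded independently of $R$. Since $a_\ell$ and $b_\ell$ are uniformly bounded near infinity, the right-hand side of \eqref{eq:Caccioppoli} then stays bounded as $R\to\infty$, even though no integrability of $a_\ell$ or $b_\ell$ is assumed.

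\textbf{Choice of cutoffs.} Fix $R_2:=R_1+2$. Pick $\eta\in C^\infty(\mathbb{R}^2)$ with $0\le\eta\le1$, $\eta=0$ on $\overline{B}_{R_1+1}$ and $\eta=1$ on $\Omega_{R_2}$. For $R>2R_2$, pick $\psi_R\in C_{\rm c}^\infty(B_{2R})$ with $0\le\psi_R\le1$, $\psi_R=1$ on $B_R$ and $|\nabla\psi_R|\le 2/R$. Set $\varphi_R:=\eta\psi_R$; then $\varphi_R\in C_{\rm c}^\infty(\Omega_{R_1})$.

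\textbf{Splitting the gradient.} We have $\nabla\varphi_R=\psi_R\nabla\eta+\eta\nabla\psi_R$. The first term is supported in the fixed compact annulus $K:=\overline{B}_{R_2}\setminus B_{R_1+1}$. The second is supported in $\{R\le|x|\le2R\}$, since $R>2R_2$ makes the two supports disjoint.

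\textbf{Uniform bounds on $a_\ell,b_\ell$.} By \eqref{eq:rho-p-limits}, $a_\ell=-\partial_\ell\rho$ and $b_\ell=p_\ell$ tend to $0$ uniformly as $|x|\to\infty$. Since they are also continuous on $\Omega_{R_1}$, we get $M:=\sup_{\Omega_{R_1+1}}(a_\ell^2+b_\ell^2)<\infty$.

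\textbf{Bounding the right-hand side.} Using \eqref{eq:Caccioppoli},
$$
\int_{\Omega_{R_1}}\varphi_R^2\left(|\nabla a_\ell|^2+a_\ell^2+|\nabla b_\ell|^2\right)dx
\le C\int_K(a_\ell^2+b_\ell^2)|\nabla\eta|^2\,dx+C\,M\,\frac{4}{R^2}\,|B_{2R}\setminus B_R|.
$$
The first term is a finite constant independent of $R$, because $K$ is compact and all functions involved are smooth there. The second term equals $12\pi CM$, also independent of $R$. So the left-hand side is bounded uniformly in $R$.

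\textbf{Passing to the limit.} Since $\varphi_R=1$ on $\{R_2<|x|<R\}$, the integral of $|\nabla a_\ell|^2+a_\ell^2+|\nabla b_\ell|^2$ over $B_R\setminus\overline{B}_{R_2}$ is bounded uniformly in $R$. Letting $R\to\infty$ and applying the monotone convergence theorem gives \eqref{eq:tail-energy}.

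The only point that needs care is the third step. Without knowing that $a_\ell,b_\ell$ are bounded near infinity, the outer contribution could not be controlled; this boundedness is exactly what \eqref{eq:rho-p-limits} supplies. The estimate also uses the planar fact that $\int|\nabla\psi_R|^2\,dx$ is scale-invariant, so the outer term does not grow with $R$.
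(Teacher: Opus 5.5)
Your proof is correct and follows essentially the same route as the paper. Both apply the Caccioppoli inequality \eqref{eq:Caccioppoli} to a product of a fixed inner cutoff and an outer cutoff with $|\nabla\varphi|\le C/R$ on $B_{2R}\setminus B_R$. The planar scale invariance of that gradient term, together with the uniform control of $a_\ell,b_\ell$ from \eqref{eq:rho-p-limits}, keeps the right-hand side bounded, and one then lets $R\to\infty$.

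The differences are minor. You use only boundedness of $a_\ell^2+b_\ell^2$, whereas the paper uses that its supremum on the annulus tends to $0$. You also explicitly keep the fixed, finite contribution of $\nabla\eta$ on the compact annulus $K$, which the paper's estimate \eqref{e4.4} leaves implicit.
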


\begin{proof}
Fix $R_2\in(R_1+2,\infty)$.  Let $\varphi_1\in C^\infty(\mathbb{R}^2;\mathbb{R})$ be
radial and satisfy $0\le\varphi_1\le1$, $\varphi_1\equiv0$ on $B_{R_1+1}$, and
$\varphi_1\equiv1$ on $\Omega_{R_2}$.  For any given $L\in(2R_2,\infty)$, take the radial
function $\varphi_2\in C_{\rm c}^\infty(\mathbb{R}^2;\mathbb{R})$ be such that $\varphi_2\equiv1$ on $B_L$,
$\varphi_2\equiv0$ on $\Omega_{2L}$, and  $|\nabla\varphi_2|\le C/L$. Let $\varphi_L:=\varphi_1\varphi_2$.
Therefore, for any given $L\in(2R_2,\infty)$,
\begin{align}\label{e4.4}
\int_{\Omega_{R_2}}\left(a_\ell^2+b_\ell^2\right)|\nabla\varphi_L|^2\,dx&=
\int_{B_{2L}\setminus B_L}\left(a_\ell^2+b_\ell^2\right)|\nabla\varphi_L|^2\,dx\notag\\
&\le\frac{C}{L^2}|B_{2L}\setminus B_L|\sup_{B_{2L}\setminus B_L}\left(a_\ell^2+b_\ell^2\right)
\le C\sup_{B_{2L}\setminus B_L}\left(a_\ell^2+b_\ell^2\right).
\end{align}
Moreover, it is easy to find that, for any $x\in\mathbb{R}^2$,
$$[\varphi_L(x)]^2\left(|\nabla a_\ell(x)|^2+[a_\ell(x)]^2+|\nabla b_\ell(x)|^2\right)
\to\left(|\nabla a_\ell(x)|^2+[a_\ell(x)]^2+|\nabla b_\ell(x)|^2\right)
$$
as $L\to\infty,$
which, together with Fatou's lemma, implies that
\begin{align}\label{e4.4x}
\int_{\Omega_{R_2}}\left(|\nabla a_\ell|^2+a_\ell^2+|\nabla b_\ell|^2\right)\,dx
\le\liminf_{L\to\infty}\int_{\Omega_{R_2}}\varphi_L^2\left(|\nabla a_\ell|^2+a_\ell^2+|\nabla b_\ell|^2\right)\,dx.
\end{align}
By \eqref{eq:rho-p-limits}, we find that both $a_\ell(x)=-\partial_\ell\rho(x)$ and
$b_\ell(x)=p_\ell(x)$ tend to $0$ uniformly as $|x|\to\infty$, which
implies that $\sup_{B_{2L}\setminus B_L}(a_\ell^2+b_\ell^2)\to0$ as $L\to\infty$.
Using this, \eqref{e4.4}, \eqref{eq:Caccioppoli}, and \eqref{e4.4x}, we conclude that
\eqref{eq:tail-energy} holds. This finishes the proof of Proposition \ref{prop:tail-energy}.
\end{proof}

\section{Kelvin inversion and quantitative decay at the inversion point}\label{s6}

In this section, by using the  Kelvin inversion argument, we transform the exterior problem in Section
\ref{s5} to a singular system near the origin. Precisely, let $R_2\in(0,\infty)$ be as in
Proposition \ref{prop:tail-energy} and $r_0:=R_2^{-1}$.
For any $y\in\mathbb{R}^2\setminus\{{\bf0}\}$, define its \emph{Kelvin inversion},
\emph{unit radial vector}, and \emph{reflection matrix}, respectively, by setting
\begin{equation}\label{eq:Kelvin-def}
I(y):=(I_1(y),I_2(y)):=\frac{y}{|y|^2},\quad \widehat y:=\frac{y}{|y|},
\quad\text{and}\quad Q(y):=\mathrm{I}_2-2\widehat y\otimes\widehat y.
\end{equation}
Then, for any $y\in\mathbb{R}^2\setminus\{{\bf0}\}$,
\begin{equation}\label{eq:Kelvin-differential}
\nabla I(y)=|y|^{-2}Q(y),\quad Q(y)^T=Q(y),\quad Q(y)^2=\mathrm{I}_2,\quad\text{and}\quad |\det \nabla I(y)|=|y|^{-4}.
\end{equation}
Let $\ell\in\{1,2\}$. For any $y\in\mathbb{R}^2$ satisfying $0<|y|<r_0$, define
\begin{equation}\label{eq:Kelvin-unknowns}
\begin{aligned}
A_\ell(y)&:=a_\ell(I(y)),&B_\ell(y)&:=b_\ell(I(y)),&P(y)&:=Q(y)p(I(y)),\\
\rho_K(y)&:=\rho(I(y)),\ \ \text{and}\ \ & m_K(y)&:=m(I(y)).&&
\end{aligned}
\end{equation}
By \eqref{eq:rho-p-limits}, we find that these quantities have the continuous extensions at $y={\bf0}$. Precisely,
\begin{equation}\label{eq:Kelvin-values-zero}
A_\ell({\bf0})=B_\ell({\bf0})=0, \quad P({\bf0})={\bf0}, \quad \rho_K({\bf0})=1,\quad \text{and}\quad m_K({\bf0})=2.
\end{equation}
Indeed, although $Q(y)$ has no limit as $y\to{\bf0}$, the definition $P({\bf0})={\bf0}$ is
continuous because $p(I(y))\to{\bf0}$ uniformly as $y\to{\bf0}$.

\begin{lemma}\label{lem:Kelvin-system}
Let $R_2\in(0,\infty)$ be as in Proposition \ref{prop:tail-energy}, $r_0:=R_2^{-1}$, and $\ell\in\{1,2\}$.
Assume $A_\ell,\ B_\ell,\ P,\ \rho_K,$ and $m_K$ are as in \eqref{eq:Kelvin-unknowns}.
Then, on $B_{r_0}\setminus\{{\bf0}\}$,
\begin{align}
-\Delta A_\ell+|\cdot|^{-4}m_KA_\ell=2|\cdot|^{-2}\rho_KP\cdot\nabla B_\ell
\label{eq:KJ1}
\end{align}
and
\begin{align}
\mathrm{div}\left(\rho_K^2\nabla B_\ell-2|\cdot|^{-2}\rho_KA_\ell P\right)=0.
 \label{eq:KJ2}
\end{align}
Furthermore,
\begin{equation}\label{eq:Kelvin-energy}
\int_{B_{r_0}}\left(|\nabla A_\ell(y)|^2+|y|^{-4}[A_\ell(y)]^2
+|\nabla B_\ell(y)|^2\right)\,dy<\infty.
\end{equation}
\end{lemma}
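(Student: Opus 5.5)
The plan is to verify the transformed system by a direct change of variables $x=I(y)$, using the conformality of the Kelvin inversion recorded in \eqref{eq:Kelvin-differential}. Then the energy bound \eqref{eq:Kelvin-energy} follows from Proposition~\ref{prop:tail-energy} and the conformal invariance of the Dirichlet integral in two dimensions.

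\emph{The key identities.} Since $I$ is an involution of $\mathbb{R}^2\setminus\{\mathbf 0\}$ and $\nabla I(y)=|y|^{-2}Q(y)$ with $Q$ a symmetric orthogonal matrix, the following hold for any smooth $f$ on $\Omega_{R_2}$ with $F:=f\circ I$:
\begin{equation*}
\nabla F(y)=|y|^{-2}Q(y)(\nabla f)(I(y))\quad\text{and}\quad \Delta F(y)=|y|^{-4}(\Delta f)(I(y)).
\end{equation*}
The first is the chain rule. The second is the classical fact that the planar Laplacian transforms under an (anti)conformal map by the factor $|\det\nabla I|=|y|^{-4}$. Applying this with $f=a_\ell$ in \eqref{eq:J1} gives
\begin{equation*}
-\Delta A_\ell+|y|^{-4}m_KA_\ell=|y|^{-4}\,2\rho_K\,p(I(y))\cdot(\nabla b_\ell)(I(y)).
\end{equation*}
To handle the right-hand side, write $(\nabla b_\ell)(I(y))=|y|^{2}Q\nabla B_\ell(y)$ and use $Q^T=Q$. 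Then
\begin{equation*}
p(I(y))\cdot(\nabla b_\ell)(I(y))=|y|^2\,(Qp(I(y)))\cdot\nabla B_\ell=|y|^2P\cdot\nabla B_\ell,
\end{equation*}
and this yields \eqref{eq:KJ1}.

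\emph{The divergence equation.} For \eqref{eq:KJ2} I would argue weakly, which avoids computing the divergence of $Q$. Take a test function $\psi\in C_{\rm c}^\infty(B_{r_0}\setminus\{\mathbf 0\})$ and set $\varphi:=\psi\circ I\in C_{\rm c}^\infty(\Omega_{R_2})$. Write the weak form of \eqref{eq:J2} with $\varphi$ and change variables, with $dx=|y|^{-4}dy$. Using $\nabla\varphi(I(y))=|y|^2Q\nabla\psi(y)$ and the analogous formula for $b_\ell$, the term $\rho^2\nabla b_\ell\cdot\nabla\varphi$ becomes $\rho_K^2|y|^4\nabla B_\ell\cdot\nabla\psi$, since $Q^2=\mathrm I_2$. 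The term $2\rho a_\ell p\cdot\nabla\varphi$ becomes $2\rho_KA_\ell|y|^2P\cdot\nabla\psi$. After multiplying by $|y|^{-4}$, one obtains exactly
\begin{equation*}
\int\left(\rho_K^2\nabla B_\ell-2|y|^{-2}\rho_KA_\ell P\right)\cdot\nabla\psi\,dy=0.
\end{equation*}
Because every quantity involved is smooth away from the origin, this is \eqref{eq:KJ2} in the classical sense as well.

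\emph{The energy bound.} For \eqref{eq:Kelvin-energy}, the same change of variables $x=I(y)$, which maps $B_{r_0}\setminus\{\mathbf 0\}$ onto $\Omega_{R_2}$, gives
\begin{equation*}
\int_{B_{r_0}}|\nabla A_\ell|^2\,dy=\int_{B_{r_0}}|y|^{-4}|(\nabla a_\ell)(I(y))|^2\,dy=\int_{\Omega_{R_2}}|\nabla a_\ell|^2\,dx.
\end{equation*}
The same computation applies to $B_\ell$. For the zeroth-order term,
\begin{equation*}
\int_{B_{r_0}}|y|^{-4}A_\ell^2\,dy=\int_{\Omega_{R_2}}a_\ell^2\,dx.
\end{equation*}
All three integrals are finite by Proposition~\ref{prop:tail-energy}. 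The single point $\{\mathbf 0\}$ is null, so it does not affect the integrals.

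The main obstacle is bookkeeping rather than analysis. It consists of checking the Laplacian transformation law and keeping track of the factor $Q$, which is precisely why $P$ is defined with $Q$. The weak formulation circumvents the fact that $Q$ has no limit at the origin and is not divergence-free.
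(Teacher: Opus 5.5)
Your proposal is correct and follows essentially the same route as the paper. In both, the chain-rule identities $\nabla(f\circ I)=|y|^{-2}Q(\nabla f)\circ I$ and $\Delta(f\circ I)=|y|^{-4}(\Delta f)\circ I$ give \eqref{eq:KJ1}; the paper justifies the Laplacian identity via harmonicity of the components of $I$, where you cite conformal invariance. The divergence equation is then transferred weakly using test functions composed with $I$ and $dx=|y|^{-4}\,dy$, and the energy bound \eqref{eq:Kelvin-energy} follows from the conformal invariance of the Dirichlet integral plus the Jacobian identity.
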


\begin{proof}
Let $y\in B_{r_0}\setminus\{{\bf0}\}$ and $r:=|y|$. From \eqref{eq:Kelvin-differential}, it follows that
$\partial_jI_k(y)=r^{-2}Q_{kj}(y)$ for any $j,k\in\{1,2\}$.
Moreover, it is easy to find that each component of $I$ is harmonic on
$\mathbb{R}^2\setminus\{{\bf0}\}$. Therefore, if $F:=f\circ I$ with $f\in C^{\infty}
(\mathbb{R}^2;\mathbb{R})$, then the chain rule yields
\begin{equation}\label{eq:Kelvin-scalar}
\nabla_yF=r^{-2}Q(\nabla_xf)\circ I\quad\text{and}\quad
\Delta_yF=r^{-4}(\Delta_xf)\circ I.
\end{equation}
Thus, $(\nabla_xf)\circ I=r^2Q\nabla_yF$. From this,
\eqref{eq:J1}, and the fact that $p\circ I=QP$, we deduce that
\eqref{eq:KJ1} holds.

Let $F_x:=\rho^2\nabla b_\ell-2\rho a_\ell p$,
$\varphi\in C_{\rm c}^\infty(B_{r_0}\setminus\{{\bf0}\})$, and for any
$x\in B_{r_0}\setminus\{{\bf0}\}$, $\psi(x):=\varphi(I(x))$.
By $\mathrm{div}_xF_x=0$ in the sense of distributions, we find that
\begin{equation}\label{e5.1}
\int_{\Omega_{R_1}}F_x(x)\cdot\nabla_x\psi(x)\,dx=0.
\end{equation}
Make the change of variables $x=I(y)$. Then
$\nabla I(x)=r^2Q(y)$ and $dx=r^{-4}dy$. From this and \eqref{e5.1},
we infer that
$$
\int_{B_{r_0}\setminus\{{\bf0}\}}r^{-2}Q(y)F_x(I(y))\cdot\nabla_y\varphi(y)\,dy=0.
$$
This, combined with \eqref{eq:Kelvin-scalar}, implies that \eqref{eq:KJ2}
holds.

Furthermore, by \eqref{eq:Kelvin-scalar} and the Jacobian identity, we conclude
that
$$
\int_{B_{r_0}}|\nabla(f\circ I)|^2\,dy=\int_{\Omega_{R_2}}|\nabla f|^2\,dx
$$
and
$$
\int_{B_{r_0}}|y|^{-4}|a_\ell(I(y))|^2\,dy=\int_{\Omega_{R_2}}|a_\ell(x)|^2\,dx.
$$
Applying these identities to $a_\ell$ and $b_\ell$ and
using \eqref{eq:tail-energy}, we find that \eqref{eq:Kelvin-energy} holds.
This finishes the proof of Lemma \ref{lem:Kelvin-system}.
\end{proof}

\begin{lemma}\label{lem:point-removal}
Let $R_2\in(0,\infty)$ be as in Proposition \ref{prop:tail-energy}, $r_0:=R_2^{-1}$, and
$\ell\in\{1,2\}$. Assume $A_\ell,\ B_\ell,\ P,\ \rho_K,$ and $m_K$ are as in \eqref{eq:Kelvin-unknowns}. Let
\begin{equation*}
\mathcal F_\ell:=\rho_K^2\nabla B_\ell-2|\cdot|^{-2}\rho_KA_\ell P.
\end{equation*}
Then $A_\ell,B_\ell\in H^1(B_{r_0};\mathbb{R})$, $\mathcal F_\ell\in L^2(B_{r_0};\mathbb{R}^2)$, and
$\mathrm{div}\mathcal F_\ell=0$ in the sense of distributions on $B_{r_0}$.
Moreover, for any $\phi\in C_{\rm c}^\infty(B_{r_0};\mathbb{R})$,
\begin{align}\label{eq:mass-test}
&\int_{B_{r_0}}\nabla A_\ell\cdot\nabla\left(\phi^2A_\ell\right)\,dy
+\int_{B_{r_0}}|\cdot|^{-4}m_K\phi^2A_\ell^2\,dy=2\int_{B_{r_0}}|\cdot|^{-2}\rho_K\phi^2A_\ell
P\cdot\nabla B_\ell\,dy.
\end{align}
\end{lemma}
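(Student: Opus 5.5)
The proof is a removable-singularity argument for the punctured ball $B_{r_0}\setminus\{\mathbf 0\}$, driven by the finite Kelvin energy \eqref{eq:Kelvin-energy} and the boundedness of $A_\ell,B_\ell,P,\rho_K,m_K$ on $B_{r_0}$ coming from \eqref{eq:tail-smallness} and \eqref{eq:rho-p-limits}.

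\emph{$H^1$ membership.} Since $A_\ell,B_\ell$ are smooth on $B_{r_0}\setminus\{\mathbf 0\}$ and bounded, they lie in $L^2(B_{r_0})$. By \eqref{eq:Kelvin-energy}, their pointwise gradients on the punctured ball belong to $L^2(B_{r_0})$. To see that these pointwise gradients are the distributional gradients on all of $B_{r_0}$, I use the standard capacity cutoff: the origin has zero $H^1$-capacity in two dimensions. Take log-cutoffs $\eta_k$ with $\eta_k\equiv 0$ near $\mathbf 0$, $\eta_k\to1$ pointwise and $\|\nabla\eta_k\|_{L^2}\to0$; for instance $\eta_k(y)=\min\{1,\max\{0,\log(|y|/\varepsilon_k^2)/\log(1/\varepsilon_k)\}\}$. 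For $\phi\in C_c^\infty(B_{r_0})$, integrate by parts against $\eta_k\phi$ and let $k\to\infty$. The error term is $\int A_\ell\phi\,\nabla\eta_k$, which is bounded by $\|A_\ell\phi\|_{L^\infty}\|\nabla\eta_k\|_{L^1}\to0$. Hence $A_\ell,B_\ell\in H^1(B_{r_0})$.

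\emph{$\mathcal F_\ell\in L^2$.} The first term $\rho_K^2\nabla B_\ell$ is in $L^2$ because $\rho_K\le1$. For the second term, $|\,|y|^{-2}\rho_KA_\ell P\,|\le |P|\,|y|^{-2}|A_\ell|$ with $|P|\le 1/4$, and $|y|^{-2}A_\ell\in L^2$ by \eqref{eq:Kelvin-energy}.

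\emph{Divergence-free across the origin.} Given $\varphi\in C_c^\infty(B_{r_0})$, test \eqref{eq:KJ2} on the punctured ball with $\eta_k\varphi$. This gives $\int\eta_k\mathcal F_\ell\cdot\nabla\varphi=-\int\varphi\,\mathcal F_\ell\cdot\nabla\eta_k$. The right side is bounded by $\|\varphi\|_\infty\|\mathcal F_\ell\|_{L^2}\|\nabla\eta_k\|_{L^2}\to0$, and the left side tends to $\int\mathcal F_\ell\cdot\nabla\varphi$ by dominated convergence.

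\emph{The identity \eqref{eq:mass-test}.} Multiply \eqref{eq:KJ1} by $\eta_k^2\phi^2A_\ell$ and integrate by parts on the punctured ball, where everything is smooth and compactly supported away from $\mathbf 0$. This yields \eqref{eq:mass-test} with $\phi$ replaced by $\eta_k\phi$. Now pass $k\to\infty$ term by term.
\begin{itemize}
\item The mass term $|y|^{-4}m_K\eta_k^2\phi^2A_\ell^2$ is dominated by $C|y|^{-4}A_\ell^2\in L^1$, since $m_K$ is bounded.
\item The right-hand side is dominated by $C|y|^{-2}|A_\ell|\,|\nabla B_\ell|$, which is in $L^1$ by Cauchy--Schwarz.
\item The gradient term expands as $\eta_k^2\nabla A_\ell\cdot\nabla(\phi^2A_\ell)+2\eta_k\phi^2A_\ell\nabla A_\ell\cdot\nabla\eta_k$. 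The first piece converges by dominated convergence. The second is bounded by $C\|A_\ell\|_\infty\|\nabla A_\ell\|_{L^2}\|\nabla\eta_k\|_{L^2}\to0$.
\end{itemize}

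\emph{Main obstacle.} The main point to get right is the choice of cutoffs: they must have $\|\nabla\eta_k\|_{L^2}\to0$, which in two dimensions forces logarithmic rather than linear cutoffs. The weighted bounds $|y|^{-2}A_\ell\in L^2$ from \eqref{eq:Kelvin-energy} are what control the singular coefficients $|y|^{-4}$ and $|y|^{-2}$; boundedness of the unknowns alone would not suffice.
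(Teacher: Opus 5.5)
Your proof is correct and is essentially the paper's cutoff-at-the-origin argument. It also adds a justification the paper skips: that the pointwise gradients on the punctured ball are the distributional gradients on $B_{r_0}$. The only real difference is the cutoff. The paper uses linear cutoffs $\zeta_\varepsilon$, whose Dirichlet energy $\|\nabla\zeta_\varepsilon\|_{L^2}$ is merely bounded, and compensates with $\|\mathcal F_\ell\|_{L^2(B_{2\varepsilon})}\to0$ and $\varepsilon^{-1}\|A_\ell\|_{L^2(B_{2\varepsilon}\setminus B_\varepsilon)}\le C\varepsilon\,\| |y|^{-2}A_\ell\|_{L^2(B_{2\varepsilon}\setminus B_\varepsilon)}$. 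So your closing remark that logarithmic cutoffs are forced is overstated, though harmless.
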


\begin{proof}
From \eqref{eq:Kelvin-energy}, we deduce that $|\nabla A_\ell|,|\nabla B_\ell|
\in L^2(B_{r_0};\mathbb{R})$. Moreover, $|\cdot|^{-4}\ge r_0^{-4}$ on $B_{r_0}$,
which, together with \eqref{eq:Kelvin-energy}, implies that $A_\ell\in
L^2(B_{r_0};\mathbb{R})$. Meanwhile, it is easy to find that the function $B_\ell=b_\ell\circ I$
is bounded because $p$ is bounded on
the closed set $\{x\in\mathbb{R}^2: |x|\ge R_1\}$, hence
$B_\ell\in L^2(B_{r_0};\mathbb{R})$. Thus, $A_\ell,
B_\ell\in H^1(B_{r_0};\mathbb{R})$.

Furthermore, since $\rho_K$ and $P$ are continuous on
$B_{r_0}$, it follows that $\rho_K$ and $P$ are bounded on $B_{r_0}$,
which further implies that
$$
\int_{B_{r_0}}\left||\cdot|^{-2}\rho_KA_\ell P\right|^2\,dy
\le C\|P\|_{L^\infty(B_{r_0};\mathbb{R}^2)}^2\int_{B_{r_0}}|\cdot|^{-4}|A_\ell|^2\,dy<\infty.
$$
Thus, $\mathcal F_\ell\in L^2(B_{r_0};\mathbb{R}^2)$.

Now, we show $\mathrm{div}\mathcal F_\ell=0$ in the sense of distributions on $B_{r_0}$.
Let $\varphi\in C_{\rm c}^\infty(B_{r_0};\mathbb{R})$. For any given
$\varepsilon\in(0,1)$, choose a radial function $\zeta_\varepsilon\in C^\infty_{\rm c}(\mathbb{R}^2;\mathbb{R})$
such that $0\le\zeta_\varepsilon\le1$, $\zeta_\varepsilon\equiv0$ on $B_\varepsilon$,
$\zeta_\varepsilon=1$ outside $B_{2\varepsilon}$, and $|\nabla\zeta_\varepsilon|\le C/\varepsilon$. Then
$\|\nabla\zeta_\varepsilon\|_{L^2(\mathbb{R}^2;\mathbb{R}^2)}\le C$ uniformly in
$\varepsilon$. Taking $\zeta_\varepsilon\varphi$ as a test function in
\eqref{eq:KJ2}, we then have
\begin{align}\label{e5.2}
\int_{B_{r_0}}\varphi\mathcal F_\ell\cdot\nabla\zeta_\varepsilon\,dx
+\int_{B_{r_0}}\zeta_\varepsilon\mathcal F_\ell\cdot\nabla\varphi\,dx=0.
\end{align}
By the proved facts that $\mathcal F_\ell\in L^2(B_{r_0};\mathbb{R}^2)$
and $\|\nabla\zeta_\varepsilon\|_{L^2(\mathbb{R}^2;\mathbb{R}^2)}\le C$ uniformly in
$\varepsilon$, we conclude that
\begin{align}\label{e5.3}
\left|\int_{B_{r_0}}\varphi\mathcal F_\ell\cdot\nabla\zeta_\varepsilon\,dx\right|
\le\|\varphi\|_{L^\infty(B_{r_0};\mathbb{R})}
\|\mathcal F_\ell\|_{L^2(B_{2\varepsilon};\mathbb{R}^2)}
\|\nabla\zeta_\varepsilon\|_{L^2(\mathbb{R}^2;\mathbb{R}^2)}\to0
\end{align}
as $\varepsilon\to0$.
Meanwhile, from the dominated convergence theorem, it follows that
\begin{align}\label{e5.4}
\int_{B_{r_0}}\zeta_\varepsilon\mathcal F_\ell\cdot\nabla\varphi\,dx\to0
\end{align}
as $\varepsilon\to0$. Therefore, by \eqref{e5.2}, \eqref{e5.3}, and \eqref{e5.4},
we conclude that
\begin{align*}
\int_{B_{r_0}}\mathcal F_\ell\cdot\nabla\varphi\,dx=0.
\end{align*}
That is, $\mathrm{div}\mathcal F_\ell=0$ in the sense of distributions
on $B_{r_0}$.

Finally, we show \eqref{eq:mass-test}. Let $\phi\in C_{\rm c}^\infty(B_{r_0};\mathbb{R})$ and, for any given
$\varepsilon$, $\zeta_\varepsilon$ be as in \eqref{e5.2}. Using
$\zeta_\varepsilon^2\phi^2A_\ell$ as a test function in \eqref{eq:KJ1}, we find that
\begin{align}\label{e5.5}
&\int_{B_{r_0}}\nabla A_\ell\cdot\nabla\left(\phi^2A_\ell\right)\,dy
+2\int_{B_{r_0}}\zeta_\varepsilon\phi^2A_\ell\nabla A_\ell\cdot\nabla\zeta_\varepsilon\,dy
+\int_{B_{r_0}}|\cdot|^{-4}m_K\zeta_\varepsilon^2\phi^2A_\ell^2\,dy\notag\\
&\quad=2\int_{B_{r_0}}|\cdot|^{-2}\rho_K\zeta_\varepsilon^2\phi^2A_\ell P\cdot\nabla B_\ell\,dy.
\end{align}
From H\"older's inequality and  the assumption $|\nabla\zeta_\varepsilon|\le C/\varepsilon$, we infer
that
\begin{align*}
\left|2\int_{B_{r_0}}\zeta_\varepsilon\phi^2A_\ell\nabla A_\ell\cdot\nabla\zeta_\varepsilon\,dy\right|
\le2\left\|\phi\nabla A_\ell\right\|_{L^2(B_{2\varepsilon}\setminus B_\varepsilon;\mathbb{R}^2)}
\left\|\phi A_\ell\nabla\zeta_\varepsilon\right\|_{L^2(B_{2\varepsilon}\setminus B_\varepsilon;\mathbb{R}^2)}
\end{align*}
and
\begin{align*}
\|\phi A_\ell\nabla\zeta_\varepsilon\|_{L^2(B_{2\varepsilon}\setminus B_\varepsilon;\mathbb{R}^2)}
\le C\varepsilon^{-1}\left(\int_{B_{2\varepsilon}\setminus B_\varepsilon}A_\ell^2\,dy\right)^{\frac12}
\le C\varepsilon\left(\int_{B_{2\varepsilon}\setminus B_\varepsilon}|y|^{-4}A_\ell^2\,dy\right)^{\frac12},
\end{align*}
which, together with the proved fact that $|\cdot|^{-4}A_\ell^2\in L^1(B_{r_0};\mathbb{R})$
[see \eqref{eq:Kelvin-energy}], further implies that
\begin{align}\label{e5.6}
\lim_{\varepsilon\to0}2\int_{B_{r_0}}\zeta_\varepsilon\phi^2A_\ell\nabla A_\ell\cdot\nabla\zeta_\varepsilon\,dy=0.
\end{align}
Applying \eqref{e5.6} and the dominated convergence theorem, and letting $\varepsilon\to0$ in \eqref{e5.5},
we then conclude that \eqref{eq:mass-test} holds. This finishes the proof of Lemma \ref{lem:point-removal}.
\end{proof}

Let $R_2\in(0,\infty)$ be as in Proposition \ref{prop:tail-energy}, $r_0:=R_1^{-1}$,
and $A_\ell$ and $B_\ell$ with $\ell\in\{1,2\}$ be as in \eqref{eq:Kelvin-unknowns}. For any $r\in(0,r_0)$, let
\begin{align}\label{e5.x1}
E_A(r):=\sum_{\ell=1}^2\int_{B_r}\left(|\nabla A_\ell|^2+|y|^{-4}A_\ell^2\right)\,dy,\quad
E_B(r):=\sum_{\ell=1}^2\int_{B_r}|\nabla B_\ell|^2\,dy,
\end{align}
\begin{align}\label{e5.x2}
E(r):=E_A(r)+E_B(r),
\end{align}
\begin{align}\label{e5.x3}
\delta(r):=\sup_{y\in B_r}|P(y)|,\quad\text{and}\quad
\omega(r):=\sup_{y\in B_r}\left|1-\rho_K(y)^2\right|.
\end{align}
It is easy to find that both $\delta(r)$ and $\omega(r)$ are nondecreasing in $r$.
Meanwhile, by \eqref{eq:Kelvin-values-zero}, we conclude that
\begin{equation}\label{eq:delta-omega-zero}
\delta(r)\rightarrow0\quad\text{and}\quad \omega(r)\rightarrow0\quad\text{as}\quad r\to0.
\end{equation}

\begin{lemma}\label{lem:A-improvement}
Let $E_A$ and $E_B$ and $\delta$ be respectively as in
\eqref{e5.x1} and \eqref{e5.x3}. Fix $\vartheta\in(0,1/4)$. Then there exist positive constants $r_A$ and
$C_A:=C_A(\vartheta)$ such that, for any $r\in(0,r_A)$,
\begin{equation}\label{eq:A-improvement}
E_A(\vartheta r)\le C_A[\delta(r)]^2E_B(r)+C_Ar^2E_A(r).
\end{equation}
\end{lemma}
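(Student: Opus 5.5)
We prove \eqref{eq:A-improvement} with a Caccioppoli-type estimate for the $A_\ell$-equation alone, based on the tested identity \eqref{eq:mass-test}. The singular mass term $|y|^{-4}m_K A_\ell^2$ is strongly coercive near $\mathbf 0$, and the coupling term is controlled by the smallness of $P$.

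Fix $\ell\in\{1,2\}$ and $r\in(0,r_A)$. Take a cutoff $\phi\in C_{\rm c}^\infty(B_r)$ with $0\le\phi\le1$, $\phi\equiv1$ on $B_{\vartheta r}$ and $|\nabla\phi|\le C_\vartheta/r$. Choose $r_A\le r_0$ small enough that $m_K\ge1$ on $B_{r_A}$; this is possible since $m_K$ is continuous with $m_K(\mathbf 0)=2$ by \eqref{eq:Kelvin-values-zero}. Also $\rho_K\le1$ by Proposition \ref{lem:maxmod}.

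Expanding $\nabla A_\ell\cdot\nabla(\phi^2A_\ell)=\phi^2|\nabla A_\ell|^2+2\phi A_\ell\nabla A_\ell\cdot\nabla\phi$, identity \eqref{eq:mass-test} gives
\begin{align*}
\int\phi^2|\nabla A_\ell|^2+\int|y|^{-4}\phi^2A_\ell^2
\le 2\int\phi|A_\ell||\nabla A_\ell||\nabla\phi|
+2\int |y|^{-2}\phi^2|A_\ell||P||\nabla B_\ell|.
\end{align*}

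We bound the two right-hand terms separately.
\begin{itemize}
\item For the first term, Young's inequality gives at most $\frac12\int\phi^2|\nabla A_\ell|^2+2\int A_\ell^2|\nabla\phi|^2$. The key point is that $|\nabla\phi|^2\le C_\vartheta r^{-2}\le C_\vartheta r^2|y|^{-4}$ on $B_r$. Hence $\int A_\ell^2|\nabla\phi|^2\le C_\vartheta r^2\int_{B_r}|y|^{-4}A_\ell^2$, which produces the term $C_Ar^2E_A(r)$.
\item For the coupling term, use $|P|\le\delta(r)$ on $B_r$ and Young's inequality in the form $2|y|^{-2}\phi^2|A_\ell||P||\nabla B_\ell|\le\frac12|y|^{-4}\phi^2A_\ell^2+2\delta(r)^2\phi^2|\nabla B_\ell|^2$. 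The first piece is absorbed into the left side, and the second is bounded by $2\delta(r)^2E_B(r)$.
\end{itemize}

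After absorbing, the left side is at least $\frac12\int_{B_{\vartheta r}}(|\nabla A_\ell|^2+|y|^{-4}A_\ell^2)$. Summing over $\ell$ yields \eqref{eq:A-improvement}.

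The one delicate point is that every integral used must be finite, so that absorption is legitimate. This is guaranteed by \eqref{eq:Kelvin-energy} together with Lemma \ref{lem:point-removal}, which is exactly what makes \eqref{eq:mass-test} valid across the origin. The other essential observation is converting the cutoff gradient $r^{-2}$ into $r^2|y|^{-4}$; this is what turns a standard Caccioppoli bound into the small factor $r^2$. Beyond these two points, the argument consists of routine bookkeeping of constants.
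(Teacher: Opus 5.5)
Your proposal is correct and follows essentially the same route as the paper: the paper also tests \eqref{eq:mass-test} with a cutoff, uses $m_K\ge1$ and $\rho_K\le1$ on $B_{r_A}$, and applies Young's inequality to the coupling term with $|P|\le\delta(r)$. It then converts $r^{-2}\int_{B_r}A_\ell^2\,dy$ into $r^2\int_{B_r}|y|^{-4}A_\ell^2\,dy$, exactly as you do.
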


\begin{proof}
Since $m_K(r)\to2$ and $\rho_K(r)\to1$ as $r\to0$, it follows that there exists $r_A\in(0,r_0)$
such that $m_K\ge1$ and $1/2\le\rho_K\le1$ on $B_{r_A}$.
For any given $r\in(0,r_A)$, take $\phi\in C_{\rm c}^\infty(B_r;\mathbb{R})$ such that $0\le\phi\le1$, $\phi\equiv1$ on
$B_{\vartheta r}$, and $|\nabla\phi|\le C_\vartheta/r$.

Then, by Young's inequality and the proved fact that $1/2\le\rho_K\le1$ on $B_{r_A}$, we conclude that,
on $B_{r_A}$,
$$
2|\cdot|^{-2}\rho_K|A_\ell||P||\nabla B_\ell|\le\frac14|\cdot|^{-4}A_\ell^2+C[\delta(r)]^2|\nabla B_\ell|^2
$$
and
$$
2\left|\phi A_\ell\nabla A_\ell\cdot\nabla\phi\right|\le\frac14\phi^2|\nabla A_\ell|^2+CA_\ell^2|\nabla\phi|^2.
$$
From this, the proved fact that $m_K\ge1$ on $B_{r_A}$, and \eqref{eq:mass-test}, we infer that, any $r\in(0,r_A)$,
\begin{equation}\label{e6.1}
\int_{B_{\vartheta r}}\left(|\nabla A_\ell|^2+|\cdot|^{-4}A_\ell^2\right)\,dy
\le C[\delta(r)]^2\int_{B_r}|\nabla B_\ell|^2\,dy+C_\vartheta r^{-2}\int_{B_r}A_\ell^2\,dy.
\end{equation}
Since $|y|^4\le r^4$ for  $y\in B_r$, it follows that $r^{-2}\int_{B_r}A_\ell^2\,dy\le r^2\int_{B_r}|\cdot|^{-4}A_\ell^2\,dy$.
This, combined with \eqref{e6.1}, implies that \eqref{eq:A-improvement} holds. This finishes the proof of
Lemma \ref{lem:A-improvement}.
\end{proof}

\begin{lemma}\label{lem:harmonic-replacement}
Let $r\in(0,\infty)$ and $H\in H^1(B_r;\mathbb{R})$ be weakly harmonic, i.e.,
$\int_{B_r}\nabla H\cdot\nabla\varphi\,dx=0$ for any
$\varphi\in H_0^1(B_r;\mathbb{R})$.  Then, for any $\vartheta\in(0,1)$,
\begin{equation}\label{eq:harmonic-scaling}
\int_{B_{\vartheta r}}|\nabla H|^2\,dy\le\vartheta^2\int_{B_r}|\nabla H|^2\,dy.
\end{equation}
Moreover, for any $g\in H^1(B_r;\mathbb{R})$, there exists a unique weakly harmonic function
$H_g\in g+H_0^1(B_r;\mathbb{R})$ such that
\begin{equation}\label{eq:harmonic-minimization}
\int_{B_r}|\nabla H_g|^2\,dy\le\int_{B_r}|\nabla g|^2\,dy
\quad\text{and}\quad\int_{B_r}\nabla H_g\cdot\nabla(g-H_g)\,dy=0.
\end{equation}
\end{lemma}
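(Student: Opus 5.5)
The plan is to treat the two assertions separately: the decay estimate \eqref{eq:harmonic-scaling} rests on interior regularity plus a Fourier expansion, and the Dirichlet problem \eqref{eq:harmonic-minimization} rests on Riesz representation. By Weyl's lemma, a weakly harmonic $H\in H^1(B_r)$ coincides a.e.\ with a smooth harmonic function in $B_r$.

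For \eqref{eq:harmonic-scaling} I would expand $H$ in polar coordinates,
$$H(s,\theta)=c_0+\sum_{k\ge1}s^k\left(\alpha_k\cos k\theta+\beta_k\sin k\theta\right),\qquad 0\le s<r.$$
This series converges in $C^\infty_{\rm loc}(B_r)$. By orthogonality of the trigonometric system, for every $t\in(0,r)$,
$$D(t):=\int_{B_t}|\nabla H|^2\,dy=\pi\sum_{k\ge1}k\,t^{2k}\left(\alpha_k^2+\beta_k^2\right).$$
Indeed, for the $k$-th mode, $\int_0^t\int_0^{2\pi}\bigl(|\partial_sH_k|^2+s^{-2}|\partial_\theta H_k|^2\bigr)s\,d\theta\,ds=\pi(\alpha_k^2+\beta_k^2)\cdot 2k^2\int_0^t s^{2k-1}\,ds$.

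Since every exponent satisfies $2k\ge2$, each term obeys $(\vartheta t)^{2k}\le\vartheta^2t^{2k}$. Summing gives $D(\vartheta t)\le\vartheta^2D(t)$ for all $t<r$. Letting $t\uparrow r$ and using monotone convergence, $D(t)\to\int_{B_r}|\nabla H|^2$, which yields \eqref{eq:harmonic-scaling}.

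An alternative route avoids series: $|\nabla H|^2$ is subharmonic, so $t\mapsto t^{-2}D(t)$ is nondecreasing, by the mean value inequality on circles. I would use the Fourier argument because it is fully explicit.

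For \eqref{eq:harmonic-minimization}, I would write $H_g=g+w$ with $w\in H_0^1(B_r)$ solving
$$\int_{B_r}\nabla w\cdot\nabla\varphi\,dy=-\int_{B_r}\nabla g\cdot\nabla\varphi\,dy\qquad\text{for all }\varphi\in H_0^1(B_r).$$
By the Poincaré inequality, $(\varphi,\psi)\mapsto\int\nabla\varphi\cdot\nabla\psi$ is an inner product on $H_0^1(B_r)$ equivalent to the $H^1$ one. The right-hand side is a bounded linear functional on this space, so Riesz representation (or Lax--Milgram) gives existence and uniqueness of $w$. Uniqueness of $H_g$ follows because the difference of two such functions is weakly harmonic and lies in $H_0^1$; testing with the difference itself gives zero gradient, and Poincaré then gives zero.

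The orthogonality identity in \eqref{eq:harmonic-minimization} is the weak harmonicity of $H_g$ tested with $\varphi=g-H_g\in H_0^1(B_r)$. Expanding $|\nabla g|^2=|\nabla H_g+\nabla(g-H_g)|^2$ and using this orthogonality gives
$$\int_{B_r}|\nabla g|^2\,dy=\int_{B_r}|\nabla H_g|^2\,dy+\int_{B_r}|\nabla(g-H_g)|^2\,dy,$$
which yields the energy inequality.

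The only delicate point is justifying the term-by-term integration in the Fourier computation. On each $B_t$ with $t<r$ this is legitimate because the series converges in $C^1(\overline B_t)$. One then passes $t\to r$ by monotonicity, so finiteness of $\int_{B_r}|\nabla H|^2$ is all that is needed.
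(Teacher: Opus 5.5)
Your proposal is correct, and it is more self-contained than the paper, which gives no argument for this lemma. The paper cites Han--Lin for \eqref{eq:harmonic-scaling} and Kinderlehrer--Stampacchia for the Dirichlet principle.

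Your second half is the standard argument behind the cited pages:
\begin{itemize}
\item Riesz representation on $H_0^1(B_r)$, using the Dirichlet inner product, which Poincar\'e makes equivalent to the $H^1$ one.
\item Uniqueness by testing the difference against itself.
\item The Pythagorean identity, obtained by testing with $g-H_g$.
\end{itemize}

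For the first half, your Fourier-mode computation $D(t)=\pi\sum_{k\ge1}k\,t^{2k}(\alpha_k^2+\beta_k^2)$ is a genuinely different, explicit route. It is tied to the planar disk and to the Laplacian. In exchange, it gives the constant exactly $1$, as the statement asserts. Decay estimates of this type in the literature are typically stated for constant-coefficient elliptic operators with a multiplicative constant $c$ in front. Such a constant would be harmless for the paper, since in Proposition~\ref{prop:Morrey} one only needs $2c\vartheta^2\le\frac14\vartheta^{2\beta}$ for some small $\vartheta$, and $\beta<1$. Still, your argument proves the lemma as written.

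Your alternative via subharmonicity of $|\nabla H|^2$ (monotonicity of $t^{-2}\int_{B_t}|\nabla H|^2$) is shorter. It also gives constant $1$ and extends verbatim to $\mathbb{R}^n$ with $\vartheta^n$ in place of $\vartheta^2$.

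Either route suffices. The only technical point is term-by-term integration, and you handle it correctly: integrate on $B_t$ with $t<r$, where the series converges in $C^1(\overline B_t)$, then pass to $t\uparrow r$ by monotone convergence.
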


The estimate \eqref{eq:harmonic-scaling} is well known (see, for instance, \cite[Lemma 3.10]{hl11}).
Moreover, the existence and uniqueness of the weakly harmonic function $H_g$ and the estimate
\eqref{eq:harmonic-minimization} can be found in \cite[pp.\,31-32]{ks00}.

\begin{lemma}\label{lem:B-improvement}
Let $E_A$, $E_B$, $\omega$, and $\delta$ be as in \eqref{e5.x1} and \eqref{e5.x3}, respectively. Fix $\vartheta\in(0,1/4)$.
Then there exist positive constants $r_B$ and
$C_B:=C_B(\vartheta)$ such that, for any $r\in(0,r_B)$,
\begin{equation}\label{eq:B-improvement}
E_B(\vartheta r)\le2\vartheta^2E_B(r)+C_B[\omega(r)]^2E_B(r)+C_B[\delta(r)]^2E_A(r).
\end{equation}
\end{lemma}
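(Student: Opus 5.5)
We use harmonic replacement on $B_r$: compare each $B_\ell$ with the weakly harmonic function $H_\ell:=H_{B_\ell}\in B_\ell+H_0^1(B_r)$ from Lemma~\ref{lem:harmonic-replacement}, applied with $g=B_\ell|_{B_r}$, which lies in $H^1(B_r)$ by Lemma~\ref{lem:point-removal}. Take $r_B:=r_A$ (or smaller), so that $1/2\le\rho_K\le1$ on $B_{r_B}$. Writing $w_\ell:=B_\ell-H_\ell\in H_0^1(B_r)$, we get
\[
\int_{B_{\vartheta r}}|\nabla B_\ell|^2\le 2\int_{B_{\vartheta r}}|\nabla H_\ell|^2+2\int_{B_r}|\nabla w_\ell|^2\le 2\vartheta^2\int_{B_r}|\nabla B_\ell|^2+2\int_{B_r}|\nabla w_\ell|^2 .
\]
Here we used \eqref{eq:harmonic-scaling} and then \eqref{eq:harmonic-minimization}. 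It therefore suffices to show
\[
\int_{B_r}|\nabla w_\ell|^2\le C[\omega(r)]^2\int_{B_r}|\nabla B_\ell|^2+C[\delta(r)]^2\int_{B_r}\left(|\nabla A_\ell|^2+|y|^{-4}A_\ell^2\right),
\]
and then sum over $\ell$.

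To estimate $\nabla w_\ell$, test $\operatorname{div}\mathcal F_\ell=0$ (Lemma~\ref{lem:point-removal}) with $w_\ell$. This is legitimate by density, since $w_\ell\in H^1_0(B_r)$ and $\mathcal F_\ell\in L^2$. Weak harmonicity of $H_\ell$ gives $\int_{B_r}\nabla H_\ell\cdot\nabla w_\ell=0$. Combining the two identities,
\[
\int_{B_r}|\nabla w_\ell|^2=\int_{B_r}\nabla B_\ell\cdot\nabla w_\ell
=\int_{B_r}(1-\rho_K^2)\nabla B_\ell\cdot\nabla w_\ell+2\int_{B_r}|y|^{-2}\rho_KA_\ell P\cdot\nabla w_\ell .
\]
The first term on the right is bounded by $\omega(r)\|\nabla B_\ell\|_{L^2(B_r)}\|\nabla w_\ell\|_{L^2(B_r)}$. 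The second is bounded by $2\delta(r)\||y|^{-2}A_\ell\|_{L^2(B_r)}\|\nabla w_\ell\|_{L^2(B_r)}$, using $\rho_K\le1$. Dividing by $\|\nabla w_\ell\|_{L^2(B_r)}$ and squaring gives the required bound, with $C_B$ an absolute constant. Note that $C_B$ could even be taken independent of $\vartheta$; the $2\vartheta^2$ term comes only from Young's inequality $(a+b)^2\le2a^2+2b^2$.

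The only delicate point is justifying the testing step. The test function $w_\ell$ is not compactly supported, and $\mathcal F_\ell$ contains the singular weight $|y|^{-2}$. Both issues are handled by Lemma~\ref{lem:point-removal}. It shows $\mathcal F_\ell\in L^2(B_{r_0})$ with vanishing distributional divergence on the whole ball, including the origin. Hence $\int\mathcal F_\ell\cdot\nabla\varphi=0$ extends from $C_c^\infty(B_r)$ to all $\varphi\in H_0^1(B_r)$ by $L^2$-continuity. The bound on the $A_\ell$-term likewise relies on \eqref{eq:Kelvin-energy}, which puts $|y|^{-2}A_\ell$ in $L^2$ near the origin.
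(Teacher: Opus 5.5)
Your proof is correct and follows the paper's argument: harmonic replacement of $B_\ell$ on $B_r$, testing $\operatorname{div}\mathcal F_\ell=0$ with $B_\ell-H_\ell\in H_0^1(B_r)$, orthogonality $\int_{B_r}\nabla H_\ell\cdot\nabla(B_\ell-H_\ell)\,dy=0$, and bounding the two error terms by $\omega(r)$ and $\delta(r)$. The only cosmetic difference is that you attach the factor $1-\rho_K^2$ to $\nabla B_\ell$ rather than to $\nabla H_\ell$, which removes the need for the coercivity bound $\rho_K\ge 1/2$, and you spell out the density argument that the paper only asserts.
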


\begin{proof}
Since $\rho_K(r)\to1$ as $r\to0$, it follows that there exists $r_B\in(0,r_0)$
such that $1/2\le\rho_K\le1$ on $B_{r_B}$.

Fix $r\in(0,r_B)$ and $\ell\in\{1,2\}$. Let $H_\ell$ be the harmonic replacement of $B_\ell$ in
$B_r$, i.e., $H_\ell\in B_\ell+H_0^1(B_r;\mathbb{R})$,
$$\Xi_\ell:=B_\ell-H_\ell\in H_0^1(B_r;\mathbb{R}),\quad\text{and}\quad
G_\ell:=2|\cdot|^{-2}\rho_KA_\ell P.
$$
By Lemma~\ref{lem:point-removal}, we find that $\Xi_\ell$ is an admissible test function to
$\mathrm{div}(\rho_K^2\nabla B_\ell-G_\ell)=0$.  Since $\int_{B_r}\nabla H_\ell\cdot\nabla\Xi_\ell\,dx=0$,
it follows that
$$\int_{B_r}\rho_K^2|\nabla\Xi_\ell|^2\,dx=-\int_{B_r}\left(\rho_K^2-1\right)\nabla H_\ell\cdot\nabla\Xi_\ell\,dx
+\int_{B_r}G_\ell\cdot\nabla\Xi_\ell\,dx.
$$
By this, the proved fact that $1/2\le\rho_K\le1$ on $B_{r_B}$, and Young's inequality, we conclude that
\begin{equation}\label{eq:Xi-estimate}
\int_{B_r}|\nabla\Xi_\ell|^2\,dx\le C[\omega(r)]^2\int_{B_r}|\nabla H_\ell|^2\,dx
+C\int_{B_r}|G_\ell|^2\,dx.
\end{equation}
Moreover, from \eqref{eq:harmonic-minimization} and the definition of $G_\ell$, we deduce that
\begin{equation}\label{e6.2}
\int_{B_r}|\nabla H_\ell|^2\,dx\le\int_{B_r}|\nabla B_\ell|^2\,dx\quad\text{and}\quad
\int_{B_r}|G_\ell|^2\,dx\le C[\delta(r)]^2\int_{B_r}|\cdot|^{-4}A_\ell^2\,dx.
\end{equation}
Therefore, applying \eqref{eq:harmonic-scaling}, \eqref{eq:Xi-estimate}, and \eqref{e6.2}, we find that
\begin{align}\label{e6.3}
\int_{B_{\vartheta r}}|\nabla B_\ell|^2\,dx&\le2\int_{B_{\vartheta r}}|\nabla H_\ell|^2\,dx
+2\int_{B_r}|\nabla\Xi_\ell|^2\,dx\notag\\
&\le2\vartheta^2\int_{B_r}|\nabla B_\ell|^2\,dx+C[\omega(r)]^2\int_{B_r}|\nabla B_\ell|^2\,dx
+C[\delta(r)]^2\int_{B_r}|\cdot|^{-4}A_\ell^2\,dx.
\end{align}
Then, summing over $\ell$ in \eqref{e6.3}, we obtain \eqref{eq:B-improvement}.
This finishes the proof of Lemma \ref{lem:B-improvement}.
\end{proof}

\begin{proposition}\label{prop:Morrey}
Let $R_2\in(0,\infty)$ be as in Proposition \ref{prop:tail-energy}, $r_0:=R_2^{-1}$, and $E$ be as in \eqref{e5.x2}.
Then, for any given $\beta\in(0,1)$, there exist $r_\beta\in(0,r_0)$ and $C_\beta\in(0,\infty)$ such that,
for any $r\in(0,r_\beta]$,
\begin{equation}\label{eq:Morrey}
E(r)\le C_\beta r^{2\beta}.
\end{equation}
\end{proposition}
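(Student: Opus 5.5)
Combine Lemmas \ref{lem:A-improvement} and \ref{lem:B-improvement} into a single one-step decay inequality for $E=E_A+E_B$, then iterate it along the geometric sequence of radii $\vartheta^k r$.

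\emph{Step 1: one-step inequality.} Fix $\beta\in(0,1)$ and choose $\vartheta\in(0,1/4)$ so small that $2\vartheta^2\le\frac12\vartheta^{2\beta}$. This is possible because $2\vartheta^{2-2\beta}\to0$ as $\vartheta\to0$. With this $\vartheta$ fixed, Lemmas \ref{lem:A-improvement} and \ref{lem:B-improvement} give constants $C_A,C_B$ depending only on $\vartheta$. Adding \eqref{eq:A-improvement} and \eqref{eq:B-improvement} gives, for $r<\min\{r_A,r_B\}$,
\begin{align*}
E(\vartheta r)\le 2\vartheta^2E_B(r)+\left(C_A[\delta(r)]^2+C_B[\omega(r)]^2\right)E_B(r)+\left(C_B[\delta(r)]^2+C_Ar^2\right)E_A(r).
\end{align*}
By \eqref{eq:delta-omega-zero}, choose $r_\beta\in(0,\min\{r_A,r_B\})$ such that, for $r\le r_\beta$,
\begin{align*}
C_A[\delta(r)]^2+C_B[\omega(r)]^2+C_B[\delta(r)]^2+C_Ar^2\le\tfrac12\vartheta^{2\beta}.
\end{align*}
Then $E(\vartheta r)\le \vartheta^{2\beta}E(r)$ for all $r\in(0,r_\beta]$. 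The key point is that $E_A$ carries no $\vartheta^2$ factor, but its whole coefficient in the combined inequality is small, so the total still contracts.

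\emph{Step 2: iteration.} Applying Step 1 repeatedly gives $E(\vartheta^k r_\beta)\le \vartheta^{2\beta k}E(r_\beta)$, and $E(r_\beta)<\infty$ by \eqref{eq:Kelvin-energy}. For an arbitrary $r\in(0,r_\beta]$, pick $k$ with $\vartheta^{k+1}r_\beta<r\le\vartheta^k r_\beta$. Since $E$ is nondecreasing in $r$,
\begin{align*}
E(r)\le E(\vartheta^kr_\beta)\le \vartheta^{2\beta k}E(r_\beta)\le \vartheta^{-2\beta}r_\beta^{-2\beta}E(r_\beta)\,r^{2\beta}.
\end{align*}
This is \eqref{eq:Morrey} with $C_\beta:=\vartheta^{-2\beta}r_\beta^{-2\beta}E(r_\beta)$.

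\emph{Main obstacle.} The delicate point is the ordering of constants. The factor $2$ in front of $\vartheta^2$ in \eqref{eq:B-improvement} prevents reaching $\beta=1$, which is why $\beta<1$ is required and why $\vartheta$ must be chosen first, depending on $\beta$. The constants $C_A(\vartheta)$ and $C_B(\vartheta)$ may blow up as $\vartheta\to0$, but this is harmless: they are absorbed by taking $r_\beta$ small, using only the qualitative smallness of $\delta$ and $\omega$ from \eqref{eq:delta-omega-zero} and the factor $r^2$ in front of $E_A$.

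Two consistency checks are needed. First, the admissibility of the tests on the whole ball, including the origin, is supplied by Lemma \ref{lem:point-removal}. Second, the radii are consistent: all balls used lie in $B_{r_0}$, and the apparent typo $r_0:=R_1^{-1}$ before \eqref{e5.x1} should be read as $R_2^{-1}$, so that $E(r)$ is finite.
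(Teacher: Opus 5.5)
Your proof is correct and follows the paper's argument: choose $\vartheta$ depending on $\beta$ so that $2\vartheta^2$ is a fixed fraction of $\vartheta^{2\beta}$, add Lemmas \ref{lem:A-improvement} and \ref{lem:B-improvement}, and take $r_\beta$ small via \eqref{eq:delta-omega-zero} to get $E(\vartheta r)\le\vartheta^{2\beta}E(r)$, then iterate and use monotonicity of $E$. The differences from the paper are cosmetic: you use $\frac12$ where the paper uses $\frac14$, and one combined smallness condition instead of the paper's two; you are also right that $r_0:=R_1^{-1}$ before \eqref{e5.x1} should read $R_2^{-1}$.
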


\begin{proof}
Fix $\beta\in(0,1)$. Choose $\vartheta\in(0,1/4)$ such that
\begin{equation}\label{eq:theta-choice}
2\vartheta^2\le\frac14\vartheta^{2\beta}.
\end{equation}
By \eqref{eq:delta-omega-zero}, we find that there exists $r_\beta\in(0,\min\{r_A,r_B\})$ such that, for any $r\in(0,r_\beta]$,
\begin{align}
C_Ar^2+C_B[\delta(r)]^2&\le\frac12\vartheta^{2\beta}\label{eq:small-A}
\end{align}
and
\begin{align}
C_A[\delta(r)]^2+C_B[\omega(r)]^2&\le\frac14\vartheta^{2\beta},\label{eq:small-B}
\end{align}
where the constants $r_A$ and $r_B$ are respectively as in Lemmas \ref{lem:A-improvement} and \ref{lem:B-improvement}.
Then, applying \eqref{eq:A-improvement}, \eqref{eq:B-improvement},
\eqref{eq:theta-choice}, \eqref{eq:small-A}, and \eqref{eq:small-B}, we find that, for any $r\in(0, r_\beta]$,
\begin{equation}\label{eq:one-step}
E(\vartheta r)\le\vartheta^{2\beta}E(r).
\end{equation}
For any $k\in\mathbb{N}$, let $r_k:=\vartheta^kr_\beta$. From
\eqref{eq:one-step} and an iteration argument, it follows that,
for any $k\in\mathbb{N}$, $E(r_k)\le\vartheta^{2\beta k}E(r_\beta)$.
This, combined with the monotonicity of $E$, implies that, for any $r\in(0,r_\beta]$
satisfying $r\in(r_{k+1}, r_k]$ for some $k\in\mathbb{N}$,
$$
E(r)\le E(r_k)\le\vartheta^{-2\beta}r_\beta^{-2\beta}E(r_\beta)r^{2\beta}.
$$
Thus, \eqref{eq:Morrey} holds with $C_\beta:=\vartheta^{-2\beta}r_\beta^{-2\beta}E(r_\beta)$. This finishes
the proof of Proposition \ref{prop:Morrey}.
\end{proof}

\section{\texorpdfstring{$L^4$}--integrability of the phase form}\label{s7}
In this section, we prove the  $L^4$-integrability of the phase form $p$ on an exterior domain
by applying the Morrey-type decay estimate established in Proposition \ref{prop:Morrey}.
Precisely, let $R_2\in(0,\infty)$ be as in Proposition \ref{prop:tail-energy}, $r_0:=R_2^{-1}$,
$p$ and $I$ be respectively as in \eqref{eq:rho-p-def} and \eqref{eq:Kelvin-def}.
Define the \emph{Kelvin phase vector} $B$ by setting, for any $x\in B_{r_0}$,
\begin{equation}\label{eq:B-vector}
B(x):=(B_1(x),B_2(x)):=p(I(x))
\end{equation}
and its \emph{disk average} $M(r)$ with $r\in(0,r_0)$ by setting
\begin{equation*}
M(r):=\frac1{\pi r^2}\int_{B_r}B(x)\,dx.
\end{equation*}
By Lemma~\ref{lem:point-removal}, we find that $B\in H^1(B_{r_0};\mathbb{R}^2)$ and its continuous extension
satisfies $B({\bf0})={\bf0}$.

\begin{lemma}\label{lem:B-L2}
Let $R_2\in(0,\infty)$ be as in Proposition \ref{prop:tail-energy}, $r_0:=R_2^{-1}$, and $B$ be as in \eqref{eq:B-vector}.
Then, for any given $\beta\in(0,1)$, there exist $r_\beta\in(0,r_0)$ and $C_\beta\in(0,\infty)$ such that,
for any $r\in(0,r_\beta]$,
\begin{equation}\label{eq:B-L2}
\int_{B_r}|B|^2\,dy\le C_\beta r^{2+2\beta}.
\end{equation}
\end{lemma}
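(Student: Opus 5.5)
We combine the Morrey decay of Proposition~\ref{prop:Morrey} with a Poincar\'e inequality on discs and a Campanato-type telescoping argument for the averages $M(r)$, using that $B$ is continuous at ${\bf0}$ with $B({\bf0})={\bf0}$.

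\emph{Step 1: controlling the gradient of $B$.} Note that $B=p\circ I$, so its components are $B_\ell=b_\ell\circ I$ for $\ell\in\{1,2\}$. Hence $|\nabla B|^2=\sum_{\ell}|\nabla B_\ell|^2$. Fix $\beta\in(0,1)$ and take $r_\beta$ and $C_\beta$ from Proposition~\ref{prop:Morrey}. Then, for every $r\in(0,r_\beta]$,
$$\int_{B_r}|\nabla B|^2\,dy\le E_B(r)\le E(r)\le C_\beta r^{2\beta}.$$

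\emph{Step 2: Poincar\'e on each disc.} By Lemma~\ref{lem:point-removal}, $B\in H^1(B_{r_0})$. The scale-invariant Poincar\'e inequality on discs then gives
$$\int_{B_r}|B-M(r)|^2\,dy\le Cr^2\int_{B_r}|\nabla B|^2\,dy\le Cr^{2+2\beta}.$$

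\emph{Step 3: the averages tend to zero at a rate.} Since $B$ is continuous at ${\bf0}$ and $B({\bf0})={\bf0}$, we have $M(r)\to{\bf0}$ as $r\to0$. Compare averages on consecutive dyadic discs: by Cauchy--Schwarz and Step 2,
$$|M(r/2)-M(r)|\le\frac{1}{\pi(r/2)^2}\int_{B_{r/2}}|B-M(r)|\,dy\le Cr^{-1}\Big(\int_{B_r}|B-M(r)|^2\,dy\Big)^{1/2}\le Cr^{\beta}.$$
Now telescope over $r2^{-k}$ for $k\ge0$ and use $M(r2^{-k})\to{\bf0}$. Since $\beta>0$, the geometric series converges, and
$$|M(r)|\le\sum_{k\ge0}C(r2^{-k})^\beta\le C_\beta r^\beta.$$

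\emph{Step 4: conclusion.} Combining Steps 2 and 3,
$$\int_{B_r}|B|^2\le 2\int_{B_r}|B-M(r)|^2+2\pi r^2|M(r)|^2\le C_\beta r^{2+2\beta},$$
which is \eqref{eq:B-L2}.

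The main point needing care is Step 3, where $M(r)\to{\bf0}$ must be justified. It follows from the uniform convergence $p(x)\to{\bf0}$ as $|x|\to\infty$ in \eqref{eq:rho-p-limits}, which gives $\sup_{B_r}|B|\to0$; this in turn forces the averages to vanish. Everything else is routine.
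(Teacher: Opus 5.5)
Your proposal is correct and follows the paper's proof essentially step for step. The steps are the same: Poincar\'e on discs combined with the Morrey decay $E_B(r)\le C_\beta r^{2\beta}$ to bound $\int_{B_r}|B-M(r)|^2\,dy$, the dyadic comparison $|M(r)-M(r/2)|\le C_\beta r^\beta$, telescoping with $M(2^{-k}r)\to{\bf0}$ coming from $B({\bf0})={\bf0}$, and recombination via $|B|^2\le 2|B-M(r)|^2+2|M(r)|^2$. Your indexing of the telescoping sum over $M(2^{-k}r)-M(2^{-k-1}r)$, $k\ge0$, is slightly cleaner than the paper's, whose $k=0$ term involves $M(2r)$.
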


To show Lemma \ref{lem:B-L2}, we need the following Poincar\'e inequality
(see, for instance, \cite[Theorem 13.36]{l17}).

\begin{lemma}\label{lem:Poincare-disk}
Let $r\in(0,\infty)$, $f\in H^1(B_r;\mathbb{R}^2)$, and
$f_{B_r}:=|B_r|^{-1}\int_{B_r}f\,dy$.  Then
\begin{equation*}
\int_{B_r}|f-f_{B_r}|^2\,dx\le Cr^2\int_{B_r}|\nabla f|^2\,dx,
\end{equation*}
where $C$ is a positive constant independent of both $f$ and $r$.
\end{lemma}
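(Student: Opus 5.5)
The plan is to prove the inequality first on the unit disk and then transfer it to $B_r$ by dilation. It suffices to treat scalar $f\in H^1(B_r;\mathbb{R})$: the vector case follows by applying the scalar estimate to each component $f_1,f_2$ and adding, since $|f-f_{B_r}|^2=\sum_k|f_k-(f_k)_{B_r}|^2$ and $|\nabla f|^2=\sum_k|\nabla f_k|^2$.

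\emph{Scaling.} Given $f\in H^1(B_r)$, set $g(z):=f(rz)$ for $z\in B_1$. Then $g\in H^1(B_1)$, $g_{B_1}=f_{B_r}$, and, by the change of variables $y=rz$,
$$
\int_{B_r}|f-f_{B_r}|^2\,dy=r^2\int_{B_1}|g-g_{B_1}|^2\,dz
\quad\text{and}\quad
\int_{B_r}|\nabla f|^2\,dy=\int_{B_1}|\nabla g|^2\,dz .
$$
So an inequality $\int_{B_1}|g-g_{B_1}|^2\le C\int_{B_1}|\nabla g|^2$ with an absolute constant $C$ immediately gives the claim with the same $C$, independent of $r$ and $f$.

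\emph{Unit disk, by contradiction and compactness.} Suppose the unit-disk inequality fails. Then there are $g_n\in H^1(B_1)$ with $(g_n)_{B_1}=0$, $\|g_n\|_{L^2(B_1)}=1$, and $\|\nabla g_n\|_{L^2(B_1)}\to0$.

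1. The sequence $\{g_n\}$ is bounded in $H^1(B_1)$.
2. By the Rellich--Kondrachov theorem on the Lipschitz domain $B_1$, a subsequence converges strongly in $L^2(B_1)$ and weakly in $H^1(B_1)$ to some $g$.
3. Weak lower semicontinuity gives $\nabla g=0$, so $g$ is constant on the connected set $B_1$.
4. Strong $L^2$ convergence preserves the mean, so the mean of $g$ is $0$; hence $g=0$.
5. Strong $L^2$ convergence also preserves the norm, so $\|g\|_{L^2(B_1)}=1$, a contradiction.

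\emph{Main obstacle and an explicit alternative.} The only nontrivial input is the compact embedding in step 2, which requires some regularity of the domain; the disk is Lipschitz, so it applies. If an explicit constant is preferred, I would instead take $f\in C^\infty(\overline{B_1})$, which is dense in $H^1(B_1)$. Write
$$
f(x)-f(z)=\int_0^1\nabla f\bigl(z+t(x-z)\bigr)\cdot(x-z)\,dt ,
$$
average this over $z\in B_1$, and apply Cauchy--Schwarz. The resulting bound is controlled by the Riesz potential $\int_{B_1}|x-w|^{-1}|\nabla f(w)|\,dw$, whose kernel is integrable uniformly in $x$, so Young's convolution inequality bounds its $L^2(B_1)$ norm by a constant times $\|\nabla f\|_{L^2(B_1)}$. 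Density then extends the inequality to all of $H^1(B_1)$.
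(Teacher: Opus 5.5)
Your proof is correct, and it is a different route only in that the paper gives no proof at all: it cites Theorem 13.36 of Leoni's textbook, so your argument is a self-contained replacement for that citation.

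What your write-up adds is the dilation step, which the paper leaves to the reference. With $g(z)=f(rz)$ one has $\int_{B_r}|\nabla f|^2\,dy=\int_{B_1}|\nabla g|^2\,dz$, because the Dirichlet integral is scale invariant in the plane. So the whole factor $r^2$ comes from the $L^2$ side, and this is exactly why $C$ is independent of $r$. The paper needs this uniformity, since \eqref{eq:B-oscillation} is applied for every $r\in(0,r_\beta]$.

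Your compactness argument on $B_1$ is complete but gives no value of $C$. Your alternative via the potential estimate does give one. There, the sketch compresses the one real computation: the change of variables $w=z+t(x-z)$ (polar coordinates about $x$) that produces the kernel $|x-w|^{-1}$, as in Gilbarg--Trudinger's Lemma 7.16. Done that way, it yields $|f(x)-f_{B_1}|\le\frac{2}{\pi}\int_{B_1}|x-w|^{-1}|\nabla f(w)|\,dw$. Young's inequality with $\int_{B_2}|w|^{-1}\,dw=4\pi$ then gives, for example, the admissible constant $C=64$.

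Either version is enough for the use of the lemma in Lemma~\ref{lem:B-L2}, which only needs some absolute constant.
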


\begin{proof}[Proof of Lemma \ref{lem:B-L2}]
By Lemma~\ref{lem:Poincare-disk} and Proposition~\ref{prop:Morrey},  we find that, for any
given $\beta\in(0,1)$, there exist $r_\beta\in(0,r_0)$ and $C_\beta\in(0,\infty)$ such that,
for any $r\in(0,r_\beta]$,
\begin{equation}\label{eq:B-oscillation}
\int_{B_r}|B-M(r)|^2\,dy\le Cr^2\int_{B_r}|\nabla B|^2\,dy\le C_\beta r^{2+2\beta},
\end{equation}
which further implies that, for any $r\in(0,r_\beta]$,
\begin{equation}\label{e7.1}
|M(r)-M(r/2)|\le |B_{r/2}|^{-\frac12}\left[\int_{B_r}|B-M(r)|^2\,dy\right]^{\frac12}
\le C_\beta r^\beta.
\end{equation}
Since $B(y)\to{\bf0}$ uniformly as $y\to{\bf0}$, it follows that, for any given $r\in(0,r_\beta]$,
$M(2^{-k}r)\to0$ as $k\to\infty$.  By this and \eqref{e7.1}, we conclude that, for any $r\in(0,r_\beta]$,
$$|M(r)|\le\sum_{k=0}^\infty\left|M\left(2^{-(k-1)}r\right)-M\left(2^{-k}r\right)\right|
\le\sum_{k=0}^\infty C_\beta (2^{-k+1}r)^\beta=C_\beta r^\beta,$$
which, combined with \eqref{eq:B-oscillation}, implies that \eqref{eq:B-L2} holds.
This finishes the proof of Lemma \ref{lem:B-L2}.
\end{proof}

For any given $R\in(0,\infty)$, define the annuli $\mathcal A_R$ and $\mathcal A_R^*$, respectively, by setting
\begin{equation*}
\mathcal A_R:=\{x\in\mathbb{R}^2:R<|x|<2R\}\quad\text{and}
\quad\mathcal A_R^*:=\{x\in\mathbb{R}^2:R/2<|x|<4R\}.
\end{equation*}

\begin{lemma}\label{lem:annular-estimates}
Let $p$ be as in \eqref{eq:rho-p-def}. Fix $\beta\in(0,1)$. Then there exists a positive constant $C_\beta$
such that, for sufficiently large $R\in(0,\infty)$,
\begin{align}
\int_{\mathcal A_R^*}|\nabla p|^2\,dx\le C_\beta R^{-2\beta}\label{eq:Dp-annulus}
\end{align}
and
\begin{align}
\int_{\mathcal A_R^*}|p|^2\,dx&\le C_\beta R^{2-2\beta}.\label{eq:p2-annulus}
\end{align}
\end{lemma}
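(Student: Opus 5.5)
The plan is to transport the two estimates of Section \ref{s6}--\ref{s7} back from the inverted picture to the annulus $\mathcal A_R^*$ by the change of variables $x=I(y)$. Take $R$ so large that $8/R\le r_\beta$, where $r_\beta$ is the smaller of the two radii supplied by Proposition~\ref{prop:Morrey} and Lemma~\ref{lem:B-L2}. Then the image of $\mathcal A_R^*$ under the Kelvin inversion is the annulus $\{1/(4R)<|y|<2/R\}$, which is contained in $B_{2/R}$, and $2/R\le r_\beta$.

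For \eqref{eq:Dp-annulus}, write $b_\ell=p_\ell$ and $B_\ell=b_\ell\circ I$. The identity proved in Lemma~\ref{lem:Kelvin-system} (conformal invariance of the Dirichlet integral) gives, for each $\ell\in\{1,2\}$,
$$
\int_{\mathcal A_R^*}|\nabla p_\ell|^2\,dx=\int_{I(\mathcal A_R^*)}|\nabla B_\ell|^2\,dy.
$$
This holds because $\nabla_yB_\ell=|y|^{-2}Q(\nabla_xp_\ell)\circ I$, so $|\nabla_yB_\ell|^2\,dy=|\nabla_xp_\ell|^2\,dx$. Summing over $\ell$ and using $|\nabla p|^2=\sum_\ell|\nabla p_\ell|^2$, the integral is bounded by $E_B(2/R)\le E(2/R)$. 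Proposition~\ref{prop:Morrey} then gives $E(2/R)\le C_\beta(2/R)^{2\beta}\le C_\beta R^{-2\beta}$.

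For \eqref{eq:p2-annulus}, use $dx=|y|^{-4}\,dy$ and $|p(I(y))|=|B(y)|$, with $B$ as in \eqref{eq:B-vector}. Since $|y|\ge 1/(4R)$ on $I(\mathcal A_R^*)$, the weight satisfies $|y|^{-4}\le 256R^4$ there. Hence
$$
\int_{\mathcal A_R^*}|p|^2\,dx=\int_{I(\mathcal A_R^*)}|y|^{-4}|B(y)|^2\,dy\le 256R^4\int_{B_{2/R}}|B|^2\,dy.
$$
Lemma~\ref{lem:B-L2} bounds the last integral by $C_\beta(2/R)^{2+2\beta}$. This yields $\int_{\mathcal A_R^*}|p|^2\,dx\le C_\beta R^{4-2-2\beta}=C_\beta R^{2-2\beta}$.

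The only point requiring care is the second estimate: the singular weight $|y|^{-4}$ must be controlled using only the lower bound $|y|\gtrsim R^{-1}$ on the annulus. This loses a factor $R^4$, which is exactly compensated by the $r^{2+2\beta}$ decay in Lemma~\ref{lem:B-L2}. That lemma, rather than the mere boundedness of $B$, is what gives the gain $R^{-2\beta}$ over the trivial bound $|\mathcal A_R^*|\,\sup|p|^2\lesssim R^2$. Everything else is bookkeeping of constants depending only on $\beta$.
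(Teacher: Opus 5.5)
Your proposal is correct and follows the paper's own proof. Both arguments map $\mathcal A_R^*$ onto $\{1/(4R)<|y|<2/R\}\subset B_{2/R}$ by the Kelvin inversion and then proceed in two steps:
\begin{itemize}
\item The gradient estimate \eqref{eq:Dp-annulus} follows by bounding $\int_{\mathcal A_R^*}|\nabla p|^2\,dx$ by $E_B(2/R)$, using conformal invariance of the Dirichlet integral, and then applying Proposition~\ref{prop:Morrey}.
\item The $L^2$ estimate \eqref{eq:p2-annulus} follows by bounding $\int_{\mathcal A_R^*}|p|^2\,dx$ by $(4R)^4\int_{B_{2/R}}|B|^2\,dy$ and then applying Lemma~\ref{lem:B-L2}.
\end{itemize}
The only difference is that you write out the change-of-variables details the paper leaves implicit, namely $\nabla_yB_\ell=|y|^{-2}Q(\nabla_xp_\ell)\circ I$ and the Jacobian $|y|^{-4}$.
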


\begin{proof}
Let $r_\beta$ be as in Proposition \ref{prop:Morrey}. Take $R$ sufficiently large such that $2/R< r_\beta$.
Note that $I$ maps $\mathcal A_R^*$ onto $\{1/(4R)<|y|<2/R\}$.
From this and Proposition \ref{prop:Morrey}, we infer that
$$\int_{\mathcal A_R^*}|\nabla p|^2\,dx\le E_B(2/R)\le C_\beta R^{-2\beta}.
$$
Similarly, applying Lemma~\ref{lem:B-L2}, we find that
$$\int_{\mathcal A_R^*}|p|^2\,dx\le(4R)^4\int_{B_{2/R}}|B(y)|^2\,dy\le C_\beta R^{2-2\beta}.
$$
This finishes the proof of Lemma \ref{lem:annular-estimates}.
\end{proof}

\begin{lemma}\label{lem:annular-GN}
Let $R\in(0,\infty)$. Then there exists a positive constant $C$ independent of $R$ such that,
for any $f\in H^1(\mathcal A_R^*;\mathbb{R}^2)$,
\begin{equation}\label{eq:annular-GN}
\int_{\mathcal A_R}|f|^4\,dx\le C\left(\int_{\mathcal A_R^*}|f|^2\,dx\right)\left(\int_{\mathcal A_R^*}|\nabla f|^2\,dx
+R^{-2}\int_{\mathcal A_R^*}|f|^2\,dx\right).
\end{equation}
\end{lemma}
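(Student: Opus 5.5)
The plan is to reduce \eqref{eq:annular-GN} to a fixed-scale statement by dilation and then apply the two-dimensional Ladyzhenskaya (Gagliardo--Nirenberg) inequality to an extension of $f$.

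\textbf{Step 1: reduction to $R=1$.} For $f\in H^1(\mathcal A_R^*;\mathbb{R}^2)$, set $g(z):=f(Rz)$ for $z\in\mathcal A_1^*$. Changing variables $x=Rz$ gives
$$\int_{\mathcal A_R}|f|^4\,dx=R^2\int_{\mathcal A_1}|g|^4\,dz,\qquad
\int_{\mathcal A_R^*}|f|^2\,dx=R^2\int_{\mathcal A_1^*}|g|^2\,dz,$$
and
$$\int_{\mathcal A_R^*}|\nabla f|^2\,dx=\int_{\mathcal A_1^*}|\nabla g|^2\,dz,\qquad
R^{-2}\int_{\mathcal A_R^*}|f|^2\,dx=\int_{\mathcal A_1^*}|g|^2\,dz.$$
Hence both sides of \eqref{eq:annular-GN} carry the same factor $R^2$, and it suffices to prove, with an absolute constant $C$,
$$\int_{\mathcal A_1}|g|^4\,dz\le C\|g\|_{L^2(\mathcal A_1^*)}^2\left(\|\nabla g\|_{L^2(\mathcal A_1^*)}^2+\|g\|_{L^2(\mathcal A_1^*)}^2\right).$$
This is where the scale-invariance of the estimate is used: the constant is fixed once and for all at $R=1$.

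\textbf{Step 2: cut-off.} Fix a smooth cutoff $\chi\in C_{\rm c}^\infty(\mathcal A_1^*)$ with $0\le\chi\le1$ and $\chi\equiv1$ on $\mathcal A_1$; this is possible because $\overline{\mathcal A_1}=\{1\le|z|\le2\}$ is compactly contained in $\mathcal A_1^*=\{1/2<|z|<4\}$. Then $\chi g$, extended by zero, lies in $H^1(\mathbb{R}^2)$ (apply this componentwise). Moreover
$$\|\chi g\|_{L^2}\le\|g\|_{L^2(\mathcal A_1^*)},\qquad
\|\nabla(\chi g)\|_{L^2}\le\|\nabla g\|_{L^2(\mathcal A_1^*)}+\|\nabla\chi\|_{L^\infty}\|g\|_{L^2(\mathcal A_1^*)}.$$

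\textbf{Step 3: Ladyzhenskaya's inequality.} Apply $\|v\|_{L^4(\mathbb{R}^2)}^4\le C\|v\|_{L^2}^2\|\nabla v\|_{L^2}^2$ for $v\in H^1(\mathbb{R}^2)$ to each component of $\chi g$. The short proof for $v\in C_{\rm c}^\infty$ is the standard one: bound $v^2(x_1,x_2)\le\int|\partial_1(v^2)|\,dx_1$ and similarly in $x_2$, then multiply, integrate and use Cauchy--Schwarz. The inequality extends to $H^1$ by density. Since $\chi\equiv1$ on $\mathcal A_1$,
$$\int_{\mathcal A_1}|g|^4\,dz\le\int_{\mathbb{R}^2}|\chi g|^4\,dz,$$
and combining Steps 2 and 3 with $(a+b)^2\le2a^2+2b^2$ gives the estimate displayed in Step 1, with $C$ depending only on $\chi$.

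No step is a genuine obstacle. The only care needed is the componentwise treatment of vector-valued $f$, where $|f|^4\le2(f_1^4+f_2^4)$ costs only a constant, and checking that the constant does not depend on $R$, which the scaling in Step 1 guarantees.
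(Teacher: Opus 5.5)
Your proposal is correct and follows essentially the same route as the paper. The paper rescales to the fixed annulus $\{1/2<|z|<4\}$, multiplies by a fixed cutoff equal to $1$ on $\{1<|z|<2\}$, applies the two-dimensional Gagliardo--Nirenberg (Ladyzhenskaya) inequality to the zero extension, bounds $\nabla(\varphi F)$, and scales back; your explicit change-of-variables computation and componentwise treatment just fill in what the paper calls ``a scaling argument.''
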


\begin{proof}
Let $R\in(0,\infty)$ and $f\in H^1(\mathcal A_R^*;\mathbb{R}^2)$. For any $x\in\mathbb{R}^2$, let $F(x):=f(Rx)$ and
$$U:=\left\{z\in\mathbb{R}^2:1/2<|z|<4\right\}.$$
 Let $\varphi\in C_{\rm c}^\infty(U;\mathbb{R})$
satisfy $\varphi=1$ on $\{z\in\mathbb{R}^2:1<|z|<2\}$. Then $\varphi F\in H_0^1(U;\mathbb{R}^2)$
and hence the zero extension of $\varphi F$ on $\mathbb{R}^2$ belongs to $H^1(\mathbb{R}^2;\mathbb{R}^2)$.
From the Gagliardo--Nirenberg interpolation inequality (see, for instance, \cite[Theorem 12.83]{l17}),
we deduce that there exists a positive constant $C$, independent of both $F$ and $\varphi$, such that
\begin{equation}\label{e7.1x}
\|\varphi F\|_{L^4(\mathbb{R}^2;\mathbb{R}^2)}^4 \le C\|\varphi F\|_{L^2(\mathbb{R}^2;\mathbb{R}^2)}^2
\|\nabla(\varphi F)\|_{L^2(\mathbb{R}^2;\mathbb{R}^2)}^2.
\end{equation}
Moreover, by the choice of $\varphi$, we conclude that
\begin{equation}\label{e7.1x1}
\|\nabla(\varphi F)\|_{L^2(\mathbb{R}^2;\mathbb{R}^2)}^2
\le C\left[\|\nabla F\|_{L^2(U;\mathbb{R}^2)}^2+\|F\|_{L^2(U;\mathbb{R}^2)}^2\right].
\end{equation}
Then, applying \eqref{e7.1x}, \eqref{e7.1x1}, and a scaling argument, we find that
\eqref{eq:annular-GN} holds. This finishes the proof of Lemma \ref{lem:annular-GN}.
\end{proof}

\begin{proposition}\label{prop:p-L4}
Let $R_2\in(0,\infty)$ be as in Proposition \ref{prop:tail-energy} and $p$ be as in \eqref{eq:rho-p-def}.
Then there exists $R_3\in[R_2,\infty)$ such that $p\in L^4(\Omega_{R_3};\mathbb{R}^2)$.
\end{proposition}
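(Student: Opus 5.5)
Fix $\beta\in(1/2,1)$, say $\beta:=3/4$. The plan is to cover the exterior domain by dyadic annuli, bound the $L^4$ norm of $p$ on each annulus, and sum the resulting geometric series.

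\textbf{Step 1: choice of radius.} Let $C_\beta$ and the threshold ``sufficiently large $R$'' be as in Lemma~\ref{lem:annular-estimates}. Choose $R_3\ge R_2$ so large that:
\begin{itemize}[leftmargin=*]
\item \eqref{eq:Dp-annulus} and \eqref{eq:p2-annulus} hold for every $R\ge R_3$;
\item $\mathcal A_R^*\subset\Omega_{R_1}$ for every $R\ge R_3$, which holds once $R_3/2>R_1$.
\end{itemize}
On $\Omega_{R_1}$ the field $p$ is smooth and bounded, so $p\in H^1(\mathcal A_R^*;\mathbb{R}^2)$ for each such $R$, and Lemma~\ref{lem:annular-GN} applies to $f=p$.

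\textbf{Step 2: estimate on one annulus.} For $R\ge R_3$, combine \eqref{eq:annular-GN} with \eqref{eq:Dp-annulus} and \eqref{eq:p2-annulus}:
$$
\int_{\mathcal A_R}|p|^4\,dx\le C\,C_\beta R^{2-2\beta}\left(C_\beta R^{-2\beta}+R^{-2}C_\beta R^{2-2\beta}\right)\le C_\beta' R^{2-4\beta}.
$$

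\textbf{Step 3: summation.} Since $\Omega_{R_3}\subset\bigcup_{k\ge0}\mathcal A_{2^kR_3}$ up to a null set of circles,
$$
\int_{\Omega_{R_3}}|p|^4\,dx\le\sum_{k=0}^\infty C_\beta'(2^kR_3)^{2-4\beta}.
$$
This series converges exactly when $2-4\beta<0$, that is, $\beta>1/2$. That holds for $\beta=3/4$, so $p\in L^4(\Omega_{R_3};\mathbb{R}^2)$.

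\textbf{Main obstacle.} The only delicate point is the exponent bookkeeping in Step~2. The crude bound $\int_{\mathcal A_R^*}|p|^2\lesssim R^2$, which follows from $p$ being bounded, would only yield $R^{2-2\beta}$ on each annulus, and that does not sum. We need the improved bound \eqref{eq:p2-annulus}, which comes from Lemma~\ref{lem:B-L2}, that is, from the vanishing of the disk averages. Together with \eqref{eq:Dp-annulus}, this produces the product exponent $2-4\beta$, and it is this that forces the choice $\beta>1/2$. Proposition~\ref{prop:Morrey} allows any $\beta<1$, so such a choice is available.

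One further check: \eqref{eq:Dp-annulus} gives $\int_{\mathcal A_R^*}|\nabla p|^2\le E_B(2/R)$. This is legitimate because $\nabla B_\ell$ is the Kelvin transform of $\nabla b_\ell$ and the Dirichlet integral is conformally invariant; Lemma~\ref{lem:annular-estimates} already records this, so it can be taken as given.
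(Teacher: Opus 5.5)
Your proposal is correct and is essentially the paper's proof. You fix $\beta\in(1/2,1)$, insert \eqref{eq:Dp-annulus} and \eqref{eq:p2-annulus} into \eqref{eq:annular-GN} to get $\int_{\mathcal A_R}|p|^4\,dx\le C_\beta R^{2-4\beta}$, and sum over the dyadic annuli $\mathcal A_{2^kR_3}$; your check that $\mathcal A_R^*\subset\Omega_{R_1}$, so that $p\in H^1(\mathcal A_R^*;\mathbb{R}^2)$, is a detail the paper leaves implicit. One harmless slip is in your side remark: with only the crude bound $\int_{\mathcal A_R^*}|p|^2\,dx\le CR^2$, the zero-order term $R^{-2}\int_{\mathcal A_R^*}|p|^2\,dx$ in \eqref{eq:annular-GN} makes the bound of order $R^2$ rather than $R^{2-2\beta}$, but your conclusion that it does not sum still holds.
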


\begin{proof}
Fix $\beta\in(1/2,1)$ and let $r_\beta$ be as in Proposition \ref{prop:Morrey}.
Take $R$ sufficiently large such that $2/R< r_\beta$.
Applying \eqref{eq:annular-GN} to $f:=p$ and using \eqref{eq:Dp-annulus} and \eqref{eq:p2-annulus},
we conclude that
\begin{equation}\label{e7.2}
\int_{\mathcal A_R}|p|^4\,dx\le C_\beta R^{2-4\beta}.
\end{equation}
Choose $R_3\in[R_2,\infty)$ so large that the estimate \eqref{e7.2} holds for any $R:=2^kR_3$
with $k\in\mathbb{Z}_+$. Note that the dyadic annuli $\{\mathcal A_{2^kR_3}\}_{k=0}^\infty$
cover $\Omega_{R_3}$ up to boundaries and $2-4\beta<0$. From this and \eqref{e7.2}, it follows that
$$
\int_{\Omega_{R_3}}|p|^4\,dx\le C_\beta\sum_{k=0}^\infty(2^kR_3)^{2-4\beta}<\infty.
$$
This shows $p\in L^4(\Omega_{R_3};\mathbb{R}^2)$ and hence finishes the proof of Proposition
\ref{prop:p-L4}.
\end{proof}

\section{Proof of Proposition \ref{prop:h-L2}}\label{s8}
In this section, we give the proof of Proposition \ref{prop:h-L2}. We begin by establishing the following
lemma.

\begin{lemma}\label{lem:defect-equation}
Let $R_1$ be as in \eqref{eq:mass-Schur}, $u\in C^\infty(\mathbb{R}^2;\mathbb{C})$ be a solution
to \eqref{eq:GL} having the asymptotic property \eqref{eq:modulus-limit},
and $\rho:=|u|$. Then $h:=1-\rho$ satisfies
\begin{equation}\label{eq:defect-equation}
\left(-\Delta+\rho(1+\rho)\right)h=\rho|p|^2\quad\hbox{on }\Omega_{R_1}.
\end{equation}
Moreover,
\begin{equation}\label{eq:positive-mass}
\rho(1+\rho)\ge\frac34\quad\hbox{on }\Omega_{R_1}.
\end{equation}
\end{lemma}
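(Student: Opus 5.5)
The identity \eqref{eq:defect-equation} is a direct rewriting of the amplitude equation \eqref{eq:amplitude} from Lemma~\ref{lem:amplitude-phase}, which holds on $\Omega_{R_1}$. The plan is to substitute $\rho=1-h$ into that equation and regroup the terms.

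First, on $\Omega_{R_1}$ we have $\Delta h=-\Delta\rho$, since $h=1-\rho$ and $\rho$ is smooth there (because $\rho>0$ on $\Omega_{R_1}$ by \eqref{eq:rho-lower}). Next, \eqref{eq:amplitude} gives $-\Delta\rho=\rho(1-\rho^2)-\rho|p|^2$. Factoring $1-\rho^2=(1+\rho)(1-\rho)=(1+\rho)h$, this becomes
$$
\Delta h=-\Delta\rho=\rho(1+\rho)h-\rho|p|^2 .
$$
Rearranging yields $-\Delta h+\rho(1+\rho)h=\rho|p|^2$, which is exactly \eqref{eq:defect-equation}.

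For \eqref{eq:positive-mass}, we use the choice of $R_1$ in \eqref{eq:tail-smallness}, which gives $\rho\ge 3/4$ on $\Omega_{R_1}$. The map $t\mapsto t(1+t)$ is increasing on $[0,\infty)$, so
$$
\rho(1+\rho)\ge \frac34\cdot\frac74=\frac{21}{16}\ge\frac34 \quad\text{on }\Omega_{R_1}.
$$

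None of these steps poses a real obstacle. The only point requiring care is to use the $R_1$ fixed in \eqref{eq:mass-Schur}–\eqref{eq:tail-smallness}, so that both the amplitude equation and the lower bound $\rho\ge3/4$ hold on the same domain $\Omega_{R_1}$.
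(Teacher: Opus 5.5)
Your proposal is correct and matches the paper's proof: substitute $\rho=1-h$ into the amplitude equation \eqref{eq:amplitude}, factor $1-\rho^2=(1+\rho)h$, and obtain \eqref{eq:positive-mass} from $\rho\ge 3/4$ in \eqref{eq:tail-smallness}. The explicit bound $\rho(1+\rho)\ge 21/16$ is a slightly more detailed version of the paper's argument, and so is your note that the $R_1$ of \eqref{eq:mass-Schur} is the larger radius, so that $\Omega_{R_1}$ lies inside the region where \eqref{eq:amplitude} was derived.
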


\begin{proof}
Since $\rho=1-h$, it follows that the equation \eqref{eq:amplitude} becomes
$\Delta h+\rho|p|^2=\rho(1+\rho)h$, which is precisely \eqref{eq:defect-equation}.
Moreover, \eqref{eq:positive-mass} directly follows from \eqref{eq:tail-smallness}.
This finishes the proof of Lemma \ref{lem:defect-equation}.
\end{proof}

We now prove Proposition \ref{prop:h-L2} by using Lemma \ref{lem:defect-equation} and
Propositions~\ref{prop:tail-energy}, \ref{lem:maxmod}, and \ref{prop:p-L4}.

\begin{proof}[Proof of Proposition \ref{prop:h-L2}]
By Proposition~\ref{prop:tail-energy} and $a_\ell=\partial_\ell h$ with $\ell\in\{1,2\}$,
we find that $\nabla h\in L^2(\Omega_{R_2};\mathbb{R}^2)$, where $R_2\in(0,\infty)$ is as in Proposition \ref{prop:tail-energy}.
Choose a radial function $\phi_1\in C^\infty(\mathbb{R}^2;\mathbb{R})$ such that $0\le\phi_1\le1$,
$\phi\equiv0$ on $B_{R_2+1/2},$ and $\phi\equiv1$ on $\Omega_{R_2+1}$.  For any given $L\in(2(R_2+1),\infty)$,
choose another radial function $\phi_2\in C_{\rm c}^\infty(\mathbb{R}^2;\mathbb{R})$ such that
$0\le\phi_2\le1$, $\phi_2\equiv1$ on $B_L$, $\phi_2\equiv0$ on $\Omega_{2L}$, and $|\nabla\phi_2|\le C/L$.
For any given $L\in(2(R_2+1),\infty)$, let
$\phi_L:=\phi_1\phi_2$. Then $\phi_L\in C_{\rm c}^\infty(\Omega_{R_2};\mathbb{R})$.

For any given $L\in(2(R_2+1),\infty)$, using $\phi_L^2h$ as a test function in \eqref{eq:defect-equation},
we find that
\begin{align}\label{e8.1}
&\int_{\Omega_{R_2}}\phi_L^2|\nabla h|^2\,dx+2\int_{\Omega_{R_2}}\phi_Lh\nabla h\cdot\nabla\phi_L\,dx
+\int_{\Omega_{R_2}}\rho(1+\rho)\phi_L^2h^2\,dx\notag\\
&\quad=\int_{\Omega_{R_2}}\rho\phi_L^2h|p|^2\,dx.
\end{align}
From \eqref{eq:positive-mass}, the proved fact $0<\rho\le1$ in Proposition \ref{lem:maxmod}, and Young's inequality,
we deduce that
\begin{align*}
2\left|\phi_Lh\nabla h\cdot\nabla\phi_L\right|&\le
\frac14\phi_L^2|\nabla h|^2+Ch^2|\nabla\phi_L|^2\quad\text{and}\quad
\rho\phi_L^2h|p|^2\le\frac38\phi_L^2h^2+C\phi_L^2|p|^4
\end{align*}
on $\Omega_{R_2}$.
This, combined with \eqref{e8.1}, further implies that
\begin{equation}\label{eq:defect-Caccioppoli}
\int_{\Omega_{R_2}}\phi_L^2\left(|\nabla h|^2+h^2\right)\,dx\le C\int_{\Omega_{R_2}}\phi_L^2|p|^4\,dx
 +C\int_{\Omega_{R_2}}h^2|\nabla\phi_L|^2\,dx.
\end{equation}
From the fact that $h(x)\to0$ uniformly as $|x|\to\infty$
and the assumptions for both $\phi_1$ and $\phi_2$ as above,
it follows that
\begin{equation}\label{e8.3}
\int_{\Omega_{R_2}}h^2|\nabla\phi_L|^2\,dx=\int_{L<|x|<2L}h^2|\nabla\phi_L|^2\,dx
\le C\sup_{B_{2L}\backslash B_L}h^2\rightarrow0
\end{equation}
as $L\to\infty.$
Moreover, by Proposition~\ref{prop:p-L4} and the proved fact that
$|p|$ is bounded on $\Omega_{R_1}$, we conclude that
\begin{align}\label{e8.4}
\int_{\Omega_{R_2}}|p|^4\,dx<\infty.
\end{align}
Meanwhile, it is easy to find that, for any $x\in\mathbb{R}^2$,
$$[\varphi_L(x)]^2\left(|\nabla h(x)|^2+[h(x)]^2\right)
\to\left(|\nabla h(x)|^2+[h(x)]^2\right)\quad{as}\quad L\to\infty,
$$
which, together with Fatou's lemma, \eqref{eq:defect-Caccioppoli},
\eqref{e8.3}, and \eqref{e8.4}, further implies that
\begin{align*}
\int_{\Omega_{R_2}}\left(|\nabla h|^2+h^2\right)\,dx
&\le\liminf_{L\to\infty}\int_{\Omega_{R_2}}\phi_L^2\left(|\nabla h|^2+h^2\right)\,dx\\
&\le\liminf_{L\to\infty}\left(C\int_{\Omega_{R_2}}\phi_L^2|p|^4\,dx
 +C\int_{\Omega_{R_2}}h^2|\nabla\phi_L|^2\,dx\right)\\
&\le C\int_{\Omega_{R_2}}|p|^4\,dx<\infty.
\end{align*}
Thus, $h\in L^2(\Omega_{R_2};\mathbb{R})$ and hence \eqref{eq:h-L2} holds. This finishes the
proof of Proposition \ref{prop:h-L2}.
\end{proof}

\smallskip

\noindent\textbf{Acknowledgements}\quad
The authors acknowledge the use of AI tools during the exploratory stage of this project.
All mathematical arguments and proofs in the final manuscript were checked
and written by the authors. The authors would also like to thank 
Professor Juncheng Wei for bringing to their attention
his preprint with Hongge Chen, Haicheng Yan, Wen Yang, arXiv:2607.17490 [math.AP],
in which Professor Wei et al., independently and simultaneously,
obtained a closely related result. Professor Wei also
informed us that he had already announced their result in a talk
at the Summer School of the Harbin Engineering 
University on July 16, 2026. 
We became aware of their work
only after the present manuscript had already been submitted to the arXiv.
We also mention that the first version of our article was
submitted to arXiv on July 18, 2026, and then this article
was on hold by arXiv till September 17, 2026. During the period
when it was on hold, on August 13 we submitted a revised version
to arXiv by adding the above acknowledgement to AI.

%
%
%

\bigskip

\noindent
Xiaosheng Lin

\smallskip

\noindent
School of Mathematical Sciences, Jimei University,
Xiamen 361005, The People's Republic of China

\smallskip

\noindent {\it E-mail}: \texttt{xslin@jmu.edu.cn}

\bigskip

\noindent Dachun Yang (Corresponding author), Wen Yuan and Yangyang Zhang

\smallskip

\noindent Laboratory of Mathematics and Complex Systems
(Ministry of Education of China),
School of Mathematical Sciences, Institute for Advanced Study,
Beijing Normal University,
Beijing 100875, The People's Republic of China

\smallskip

\noindent{\it E-mails:} \texttt{dcyang@bnu.edu.cn} (D. Yang)

\noindent\phantom{{\it E-mails:}} \texttt{wenyuan@bnu.edu.cn} (W. Yuan)

\noindent\phantom{{\it E-mails:}} \texttt{yangyzhang@bnu.edu.cn} (Y. Zhang)

\bigskip

\noindent Sibei Yang

\medskip

\noindent School of Mathematics and Statistics, Lanzhou University,
Lanzhou 730000, The People's Republic of China

\smallskip

\noindent{\it E-mail:} \texttt{yangsb@lzu.edu.cn}

\end{document}